\newcommand{\Z}{\mathbb{Z}}                     
\newcommand{\R}{\mathbb{R}}                     
\newcommand{\set}[2]{\left\{{#1}\mid{#2}\right\}}       
\newcommand{\qed}{\hfill $\Box$ \bigskip}       
\newcommand{\sgn}{\mathrm{sgn\,}}               
\newcommand{\codim}{\mathrm{codim}}           
\newcommand{\graf}{\mathrm{graph\,}}            
\newcommand{\ran}{\mathrm{ran\,}}       
\newtheorem{thm}{\sc Theorem}[section]      
\newtheorem{cor}[thm]{\sc Corollary}        
\newtheorem{lem}[thm]{\sc Lemma}            
\newtheorem{prop}[thm]{\sc  Proposition}     
\newtheorem{defn}[thm]{\sc Definition}      
\newtheorem{rem}[thm]{\sc Remark}       
\title{The homology of path spaces and Floer homology with conormal 
boundary conditions}
\author{Alberto Abbondandolo\thanks{The first author was partially supported by a {\em Humboldt Research Fellowship for Experienced Researchers}.}, Alessandro Portaluri\thanks{The second author was partially supported by the MIUR project {\em Variational Methods and Nonlinear Differential Equations}.}, Matthias Schwarz\thanks{The third author was partially supported by the DFG grant SCHW 892/2-3.}}
\date{December 23, 2008}
\begin{document}

\maketitle

\centerline{\em Dedicated to Felix E.\ Browder}

\begin{abstract}
We define the Floer complex for Hamiltonian orbits  on the cotangent bundle of a compact manifold which satisfy non-local conormal boundary conditions. 
We prove that the homology of this chain complex
is isomorphic to the singular homology of the natural path space
associated to the boundary conditions. 
\end{abstract} 

\section*{Introduction}

Let $H:[0,1]\times T^*M \rightarrow \R$ be a smooth time-dependent
Hamiltonian on the cotangent bundle of a compact manifold $M$, and let
$X_H$ be the Hamiltonian vector field induced by $H$ and by the
standard symplectic structure of $T^*M$. The aim of this paper is to
define the {\em Floer complex} for 
the orbits of $X_H$ satisfying {\em non-local conormal boundary
  conditions}, and to compute its homology. More precisely, we fix a
compact submanifold $Q$ of $M^2 = M \times M$ and we look for
solutions $x:[0,1]\rightarrow T^*M$ of the equation
\[
x'(t) = X_H(t,x(t)),
\]
such that the pair $(x(0),-x(1))$ belongs to the {\em conormal bundle}
$N^* Q$ of $Q$ in $T^* M^2$. We recall that the conormal bundle of a
submanifold $Q$ of the manifold $N$ (here $N=M^2$) is the set of
covectors in $T^*N$ which are based at points of $Q$ and vanish on the
tangent space of $Q$. Conormal bundles are Lagrangian submanifolds of
the cotangent bundle, and we show that they can be characterized as 
those mid-dimensional submanifolds of $T^*N$ on which the Liouville form vanishes identically (see Proposition \ref{conorcar} for the precise statement). 

When $Q=Q_0 \times Q_1$ is the product of two submanifolds $Q_0$,
$Q_1$ of $M$, the above boundary condition is a local one, requiring
that $x(0)\in N^* Q_0$ and $x(1) \in N^* Q_1$. Extreme cases are given
by $Q_0$ and/or $Q_1$ equal to a point  or equal to $M$: since the
conormal bundle of a point $q\in M$ is the fiber $T_{q}^* M$, the
first case produces a {\em Dirichlet boundary condition}, while since
$N^* M$ is the zero section in $T^*M$, the second one corresponds to a
{\em Neumann boundary condition}. A non-local example is given by
$Q=\Delta$, the diagonal in $M\times M$, inducing the periodic orbit
problem (provided that $H$ can be extended to a smooth function on $\R
\times T^*M$ which is $1$-periodic in time). Another interesting
choice is the one producing the figure-eight problem: $M$ is itself a
product $O \times O$, and $Q$ is the subset of $M ^2 = O ^4$
consisting of points of the form $(o,o,o,o)$, $o\in O$. The Floer
complex for the figure-eight problem enters in the factorization of
the pair-of-pants product on $T^*O$ (see \cite{as08}).

The set of solutions of the above non-local boundary value Hamiltonian
problem is denoted by $\mathscr{S}^Q(H)$. If $H$ is generic, all of these
solutions are non-degenerate, meaning that the linearized problem has
no non-zero solutions, and $\mathscr{S}^Q(H)$ is at most countable
(and in general infinite). The free Abelian group generated by the
elements of $\mathscr{S}^Q(H)$ is denoted by $F^Q(H)$. This group can
be graded by the Maslov index of the path $\lambda$ of Lagrangian
subspaces of $T^* (\R^n \times \R^n)$ which is produced by the graph of the
differential of the Hamiltonian flow along $x\in \mathscr{S}^Q(H)$,
with respect to the the tangent space of $N^*Q$, after a suitable
symplectic trivialization of $x^*(TT^*M)\cong [0,1] \times T^ *\R^n$,
$n=\dim M$. Our first result is that this Maslov index does not depend
on the choice of this trivialization, provided that the trivialization
preserves the vertical subbundle and maps the tangent space of $N^* Q$
at $(x(0),-x(1))$ into the conormal space $N^* W$ of some linear
subspace  $W\subset \R^n \times \R^n$. See Section \ref{hscbcbs} for
the precise statement. 

When the Hamiltonian $H$ is the Fenchel-dual of a fiber-wise strictly convex Lagrangian 
$L:[0,1] \times TM \rightarrow \R$, the $M$-projection of the orbit $x\in \mathscr{S}^Q(H)$ is an
extremal curve $\gamma$ of   the Lagrangian action functional
\[
\mathbb{S}_L(\gamma) = \int_0^1 L(t,\gamma(t),\gamma'(t))\, dt,
\]
subject to the non-local constraint $(\gamma(0),\gamma(1))\in Q$. In
this case,  
a theorem of Duistermaat \cite{dui76} can be used to show that the
above Maslov index $\mu(\lambda,N^* W)$ coincides up to a shift with
the Morse index $i^Q(\gamma)$ of $\gamma$, where $\gamma$ is seen as a critical point of  
$\mathbb{S}_L$ in the space of paths on $M$ satisfying the above
non-local constraint.
Indeed, in Section \ref{morindsec} we prove the identity
\[
i^Q(\gamma) = \mu(\lambda,N^* W)  + \frac{1}{2} ( \dim Q - \dim M) - 
\frac{1}{2} \nu^Q(x),
\]
where $\nu^Q(x)$ denotes the nullity of $x$, i.e.\ the dimension of
the space of solutions of the linearization at $x$ of the non-local 
boundary value problem.
This formula suggests that we should incorporate the shift $(\dim Q - \dim M)/2$
into 
the grading of $F^Q(H)$, which is then graded by the index
\[
\mu^Q(x) := \mu(\lambda,N^* W)  + \frac{1}{2} ( \dim Q - \dim M).
\]
This number is indeed an integer if $x$ is non-degenerate.
When the Hamiltonian $H$ satisfies suitable growth conditions on the
fibers 
of $T^* M$, the solutions of the Floer equation
\[
\partial_s u + J(t,u) (\partial_t u - X_H(t,u)) = 0
\]
 on the strip $\R \times [0,1]$ with coordinates $(s,t)$, satisfying
 the boundary condition $(u(s,0),-u(s,1))\in N^* Q$ for every real
 number $s$ and converging to two given elements of
 $\mathscr{S}^Q(H)$  for $s\rightarrow \pm \infty$, form a pre-compact
 space. Here $J$ is a time-dependent almost complex structure on $T^*M$, compatible with the symplectic structure and $C^0$-close enough to the almost complex structure 
induced by a  Riemannian metric on $M$. Assuming also that the elements of  
$\mathscr{S}^Q(H)$ are non-degenerate, a standard counting process
 defines  a boundary operator on the graded group $F^Q_*(H)$, which
 then  carries the structure of a chain complex, called the {\em Floer complex} of 
 $(T^*M,Q,H,J)$. This free chain complex is well-defined up to chain
 isomorphism.  
 
 Changing the Hamiltonian $H$ produces chain homotopy equivalent Floer
 complexes, so  in order to compute the homology of the Floer complex
 we can  assume that $H$ is the Fenchel-dual of a  
Lagrangian $L$ which is positively quadratic in the velocities. In this case, we prove that the Floer complex of 
$(T^*M,Q,H,J)$ is isomorphic to the {\em Morse complex} of the
 Lagrangian  action functional $\mathbb{S}_L$ on the Hilbert manifold
 consisting of the absolutely continuous paths $\gamma:[0,1]
 \rightarrow M$  with square-integrable derivative and such that the
 pair 
$(\gamma(0),\gamma(1))$ is in $Q$. The latter space is homotopically  
equivalent to the path space
\[
P_Q([0,1],M) = \set{\gamma: [0,1] \rightarrow M}{\gamma \mbox{ is
    continuous  and } (\gamma(0),\gamma(1)) \in Q},
\]
so Morse theory for $\mathbb{S}_L$ implies that the homology of the
Floer complex of  of $(T^*M,Q,H,J)$ is isomorphic to the singular
homology of $P_Q([0,1],M)$. The isomorphism between the Morse and the
Floer complexes is constructed by counting the space of solutions of a
mixed problem, obtained by coupling the negative gradient flow of
$\mathbb{S}_L$ with respect to a $W^{1,2}$-metric with the Floer
equation on the half-strip $[0,+\infty[ \times [0,1]$.

These results generalize the case of Dirichlet boundary conditions ($Q$
is the singleton $\{(q_0,q_1)\}$ for some pair of points $q_0,q_1\in
M$, and $P_Q([0,1],M)$ has the homotopy type of the based loop space
of $M$) and the case of periodic boundary conditions ($Q=\Delta$, and
$P_Q([0,1],M)$ is the free loop space of $M$), studied by the first
and last author in \cite{as06}. They also generalize the results by Oh \cite{oh97}, 
concerning the case $Q=M\times S$, where $S$ is a compact submanifold of $M$ (with such a choice, the path space $P_Q([0,1],M)$ is homotopically equivalent to $S$, so one gets a finitely generated Floer homology, isomorphic to the singular homology of $S$). See \cite{vit96} and \cite{sw06} for
previous proofs of the isomorphism between the Floer homology for
periodic Hamiltonian orbits on $T^*M$ and the singular homology of the
free  loop space of $M$ (see also the review paper \cite{web05}). See also 
\cite{ng06} for the role of conormal bundles in the study of knot invariants. 

Most of the arguments from \cite{as06} readily extend to the present
more  general setting, so we just sketch them here, focusing the
analysis   on the index questions, which constitute the more original
part  of this paper.

\medskip

\paragraph{Acknowledgments.} This paper was completed while the first author was spending a one-year research period at the Max-Planck-Institut f\"ur Mathematik in den Naturwissenschaften and the Mathematisches Institut of the Universit\"at Leipzig, with a {\em Humboldt Research Fellowship for Experienced Researchers}. He wishes to thank the Max-Planck-Institut for its warm hospitality and the Alexander von Humboldt Foundation for its 
financial support.

\section{Linear preliminaries}
\label{lin}

Let $T^* \R^n = \R^n \times (\R^n)^*$ be the cotangent space of the
vector  space $\R^n$.  The {\em Liouville one-form} on $T^* \R^n$ is
the  tautological one-form $\theta_0 := p\, dq$, that is
\[
\theta_0(q,p) [(u,v)] := p[u], \quad \forall q,u \in \R^n, \; \forall
p,v  \in (\R^n)^*.
\]
Its differential
\[
\omega_0 := d\theta_0 = dp \wedge dq, \quad \omega_0
[(q_1,p_1),(q_2,p_2)]  := p_1[q_2] - p_2[q_1],
\]
is the {\em standard symplectic form} on $T^* \R^n$. The group of
linear  automorphisms of $T^* \R^n$ which preserve $\omega_0$ is the
{\em symplectic group} $\mathrm{Sp}(T^* \R^n)$. The {\em Lagrangian
  Grassmannian} $\mathscr{L}(T^* \R^n)$ is the space of all
$n$-dimensional  linear subspaces of $T^* \R^n$ on which $\omega_0$
vanishes identically.

\begin{rem}
\label{linlag}
For future reference, we recall the following description of a Lagrangian linear subspace $\lambda$ of $T^* \R^n$. Let $X$ be the linear subspace of $\R^n$ such that $\lambda \cap (\R^n \times (0)) = X \times (0)$. Choose a linear complement $Y$ of $X$ in $\R^n$, and let $(\R^n)^* = Y^{\perp} \oplus X^{\perp}$ be the corresponding decomposition of the dual of $\R^n$. Then $\lambda \cap (Y\times Y^{\perp}) = (0)$. Indeed, if $(q,p)$ is in $\lambda$ then the fact that $\lambda$ is Lagrangian and contains $X\times (0)$ implies that for every $x\in X$ we have
\[
0 = \omega [ (q,p), (x,0) ] = p[x],
\]   
so $p\in X^{\perp}$. If, in addition, $(q,p)$ is also in $Y\times Y^{\perp}$, this implies that $p=0$, and hence $q=0$ because of the definition of $X$. In particular, $\lambda$ is the graph of a linear mapping from $X\times X^{\perp}$ into $Y\times Y^{\perp}$.
\end{rem}

If $\lambda,\nu:[a,b] \rightarrow \mathscr{L}(T^* \R^n)$ are two
continuous  paths of Lagrangian subspaces, the {\em relative Maslov
  index}  $\mu(\lambda,\nu)$ is a half-integer counting the
intersections  $\lambda(t) \cap \nu(t)$ algebraically. We refer to
\cite{rs93} for the definition and the main properties of the relative
Maslov index. Here we just need to recall the formula for the relative
Maslov index $\mu(\lambda,\lambda_0)$ of a continuously differentiable
Lagrangian path $\lambda$ with respect to a constant one $\lambda_0$,
in  the case of {\em regular crossings}. Let $\lambda: [a,b]
\rightarrow  \mathscr{L}(T^* \R^n)$ be a continuously differentiable
curve,  and let $\lambda_0$ be in $\mathscr{L}(T^*\R^n)$. Fix $t\in
[a,b]$  and let $\nu_0\in \mathscr{L}(T^* \R^n)$ be a Lagrangian
complement  of $\lambda(t)$. If $s$ belongs to a suitably small
neighborhood  of $t$ in $[a,b]$, for every $\xi\in \lambda(t)$ we can
find a  unique $\eta(s)\in \nu_0$ such that $\xi+\eta(s) \in
\lambda(s)$.  The {\em crossing form} $\Gamma(\lambda,\lambda_0,t)$ at
$t$  is the quadratic form on $\lambda(t) \cap \lambda_0$ defined by
\begin{equation}
\label{crform}
\Gamma(\lambda,\lambda_0,t) : \lambda(t) \cap \lambda_0 \rightarrow
\R,  \quad \xi \mapsto \frac{d}{ds} \omega_0(\xi,\eta(s))\Bigl|_{s=t}.
\end{equation}
The number $t$ is said to be a {\em crossing} if $\lambda(t) \cap
\lambda_0  \neq (0)$, and it is called a {\em regular crossing} if the
above  quadratic form is non-degenerate. Regular crossings are
isolated, and  if $\lambda$ and $\lambda_0$ have only regular
crossings the  relative Maslov index of $\lambda$ with respect to
$\lambda_0$  is defined as
\begin{equation}
\label{relmasl}
\mu(\lambda,\lambda_0) := \frac{1}{2} \sgn \Gamma(\lambda,\lambda_0,a)
+  \sum_{a<t<b} \sgn \Gamma (\lambda,\lambda_0,t) + \frac{1}{2} \sgn
\Gamma(\lambda,\lambda_0,b),
\end{equation}
where sgn denotes the signature.

If $V$ is a linear subspace of $\R^n$, its {\em conormal space} $N^*
V$ is  the linear subspace of $T^* \R^n$ defined by
\[
N^*  V := V \times V^{\perp} = \set{(q,p) \in \R^n \times
  (\R^n)^*}{q\in V, \; p[u] = 0 \; \forall u \in V}. 
\]
Conormal spaces are Lagrangian subspaces of $T^* \R^n$. The set of all
conormal spaces is denoted by $\mathscr{N}^*(\R^n)$, 
\[
\mathscr{N}^*(\R^n) := \set{N^* V}{V\in \mathrm{Gr}(\R^n)},
\]
where $\mathrm{Gr}(\R^n)$ denotes the Grassmannian of all linear
subspaces  of $\R^n$.
The conormal space of $(0)$, $N^* (0) = (0) \times (\R^n)^*$, is
called  the {\em vertical subspace}. Note that if $\alpha$ is a linear
automorphism of $\R^n$ and $V\in \mathrm{Gr}(\R^n)$, then
\begin{equation}
\label{tn}
\left( \begin{array}{cc} \alpha^{-1} & 0 \\ 0 & \alpha^T \end{array}
\right)  N^* V = \alpha^{-1} V \times \alpha^T V^{\perp} = \alpha^{-1}
V \times (\alpha^{-1} V)^{\perp} = N^* (\alpha^{-1} V),
\end{equation}
where $\alpha^T \in \mathrm{L}((\R^n)^*,(\R^n)^*)$ denotes the
transpose  of $\alpha$.

Let $C: T^* \R^n \rightarrow T^* \R^n$  be the linear involution
\[
C(q,p) := (q,-p), \quad \forall q\in \R^n, \; \forall p\in (\R^n)^*,
\]
and note that $C$ is anti-symplectic, meaning that
\[
\omega_0 (C\xi,C\eta) = - \omega_0(\xi,\eta), \quad \forall \xi,\eta
\in  T^* \R^n.
\]
In particular, $C$ maps Lagrangian subspaces into Lagrangian
subspaces.  Changing the sign of the symplectic structure changes the
sign  of the Maslov index, so the naturality property of the Maslov
index  implies that
\begin{equation}
\label{clag}
\mu(C\lambda,C\nu) = - \mu(\lambda,\nu),
\end{equation}
for every pair of continuous paths $\lambda,\nu:[0,1] \rightarrow
\mathscr{L}(T^*\R^n)$. Since conormal subspaces of $T^* \R^n$ are
$C$-invariant, we deduce the following:

\begin{prop}
\label{vanish1}
If $V,W:[0,1] \rightarrow \mathrm{Gr}(\R^n)$  are two continuous paths
in  the Grassmannian of $\R^n$, then $\mu(N^*V,N^*W)=0$.
\end{prop}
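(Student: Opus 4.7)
The plan is to exploit the anti-symplectic involution $C$ as a symmetry that forces the Maslov index to be its own negative. Everything needed has been set up in the paragraph preceding the proposition: conormal subspaces are $C$-invariant, and the naturality property of the Maslov index combined with the fact that $C$ reverses the sign of $\omega_0$ yields the identity $\mu(C\lambda,C\nu) = -\mu(\lambda,\nu)$ recorded in \eqref{clag}.

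First I would check the $C$-invariance explicitly, just to make the argument self-contained: if $U \in \mathrm{Gr}(\R^n)$ and $(q,p) \in N^*U = U \times U^\perp$, then $C(q,p) = (q,-p)$ still lies in $U \times U^\perp$, since $U^\perp$ is a linear subspace. Hence $C(N^* U) = N^* U$ for every $U$, and in particular $C\circ N^* V = N^* V$ and $C \circ N^* W = N^* W$ as paths in $\mathscr{L}(T^*\R^n)$.

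Applying \eqref{clag} to the pair $\lambda = N^* V$, $\nu = N^* W$, I obtain
\[
\mu(N^*V, N^*W) = \mu(C \circ N^*V, C \circ N^*W) = -\mu(N^* V, N^* W),
\]
so $2\mu(N^*V, N^*W) = 0$, and therefore $\mu(N^*V, N^*W) = 0$. No approximation argument or reduction to regular crossings is needed, since the conclusion follows purely from the symmetry.

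There is really no hard step here; the only point to keep in mind is that the Maslov index of \cite{rs93} is a priori a half-integer, so the identity $2\mu = 0$ is what is actually required to conclude, rather than just $\mu \equiv \mu \pmod{\Z}$. The substance of the proposition is that $C$-invariance of conormal subspaces, a reflection of the fact that $N^*V$ is closed under vertical sign change, is strong enough to kill the relative Maslov index of any pair of such paths.
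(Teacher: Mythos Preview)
Your argument is correct and is exactly the one the paper intends: the proposition is stated immediately after the observation that conormal subspaces are $C$-invariant, and the paper simply says ``we deduce the following'' from identity \eqref{clag}, which is precisely the symmetry argument you carry out. There is no separate formal proof in the paper to compare against; you have just written out what the authors left implicit.
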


The subgroup of the symplectic automorphisms of $T^* \R^n$ which fix
the  vertical subspace is denoted by
\[
\mathrm{Sp_v}(T^* \R^n) := \set{B \in \mathrm{Sp}(T^* \R^n)}{B N^* (0)
  =  N^* (0)}.
\]
The elements of the above subgroup can be written in matrix form as
\[
B = \left( \begin{array}{cc} \alpha^{-1} & 0 \\ \beta & \alpha^T
  \end{array} 
\right),
\]
where $\alpha\in \mathrm{GL}(\R^n)$, $\beta\in
\mathrm{L}(\R^n,(\R^n)^*)$,  and $\beta\alpha \in \mathrm{L_s}(\R^n,
(\R^n)^*)$, the space of symmetric linear mappings. Note that
every element of $\mathrm{Sp_v}(T^* \R^n)$ can be decomposed as
\begin{equation}
\label{fs}
B =  \left( \begin{array}{cc} \alpha^{-1} & 0 \\ \beta & \alpha^T
  \end{array} \right) = \left( \begin{array}{cc} I & 0 \\ \beta \alpha
    & I  \end{array} \right) \left( \begin{array}{cc} \alpha^{-1} & 0
    \\ 0  & \alpha^T \end{array} \right).
\end{equation}
The second result of this section is the following:

\begin{prop}
\label{vanish2}
Let $V_0,V_1$ be linear subspaces of $\R^n$, and
let $B:[0,1]\rightarrow \mathrm{Sp_v}(T^* \R^n)$ be a continuous path
such that $B(0)N^*V_0 = N^*V_0$ and $B(1)N^* V_1 = N^*V_1$. Then
\[
\mu(B N^*V_0,N^*V_1) = 0.
\]
\end{prop}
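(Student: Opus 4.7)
The plan is to use the decomposition (\ref{fs}) to reduce the path $B(t)$ to its diagonal part by a convex homotopy of its unipotent factor, turning the path $BN^*V_0$ into a path of conormal subspaces, to which Proposition~\ref{vanish1} applies. Explicitly, I would factor $B(t) = U(t) D(t)$ with
\[
U(t) = \left( \begin{array}{cc} I & 0 \\ \gamma(t) & I \end{array} \right), \quad \gamma(t) \in \mathrm{L_s}(\R^n,(\R^n)^*), \qquad D(t) = \left( \begin{array}{cc} \alpha(t)^{-1} & 0 \\ 0 & \alpha(t)^T \end{array} \right).
\]
By (\ref{tn}), $D(t)N^*V_0 = N^*(\alpha(t)^{-1}V_0)$ is already a path of conormal subspaces, so the content of the problem lies entirely in the unipotent factor.

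A direct calculation shows that the hypothesis $B(0)N^*V_0 = N^*V_0$ is equivalent to the pair of conditions $\alpha(0)V_0 = V_0$ and $\gamma(0)(V_0) \subseteq V_0^{\perp}$, and analogously that $B(1)N^*V_1 = N^*V_1$ is equivalent to $\alpha(1)V_1 = V_1$ together with $\gamma(1)(V_1) \subseteq V_1^{\perp}$. These conditions are linear in $\gamma$, so they survive the scaling $\gamma \mapsto (1-s)\gamma$. I would therefore consider the homotopy
\[
B_s(t) := \left( \begin{array}{cc} I & 0 \\ (1-s)\gamma(t) & I \end{array} \right) D(t), \quad s\in[0,1],
\]
which connects $B_0 = B$ to $B_1 = D$ through paths in $\mathrm{Sp_v}(T^*\R^n)$, each satisfying the hypotheses of the proposition. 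At $s=1$ one has $B_1(t)N^*V_0 = N^*(\alpha(t)^{-1}V_0)$, a path of conormals, so Proposition~\ref{vanish1} gives $\mu(B_1 N^*V_0, N^*V_1) = 0$.

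The main obstacle is to show that $\mu(B_s N^*V_0, N^*V_1)$ is genuinely independent of $s$, since the right endpoint $B_s(1)N^*V_0$ moves in $\mathscr{L}(T^*\R^n)$ as $s$ varies, even though the left endpoint $B_s(0)N^*V_0 = N^*V_0$ is constant. The key point is that the boundary conditions at $t=1$ force
\[
B_s(1)N^*V_0 \cap N^*V_1 = \bigl\{ (q, (1-s)\gamma(1)q + p) : q \in \alpha(1)^{-1}V_0 \cap V_1,\ p \in (\alpha(1)^{-1}V_0)^{\perp} \cap V_1^{\perp} \bigr\},
\]
whose dimension is independent of $s$: for $q \in V_1$ the condition $\gamma(1)(V_1) \subseteq V_1^{\perp}$ forces $(1-s)\gamma(1)q \in V_1^{\perp}$ for every $s$, so the vertical constraint reduces to $p \in V_1^{\perp}$. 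This constancy of intersection dimensions at the endpoints along the homotopy is exactly what is needed in order to apply the homotopy invariance of the Maslov index and conclude $\mu(B N^*V_0, N^*V_1) = \mu(B_1 N^*V_0, N^*V_1) = 0$.
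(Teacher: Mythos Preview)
Your argument is correct. Both your proof and the paper's begin with the same decomposition $B = UD$ and the same key observation that the boundary hypotheses force $\alpha(0)V_0 = V_0$, $\gamma(0)(V_0)\subseteq V_0^{\perp}$, $\alpha(1)V_1 = V_1$, $\gamma(1)(V_1)\subseteq V_1^{\perp}$; the divergence is only in how the unipotent factor is killed. The paper first replaces $U$ by the affine interpolation $F(t)=tU(1)+(1-t)U(0)$ via a \emph{fixed-endpoint} homotopy, then factors $F=F_1F_0$ so that $F_0$ preserves $N^*V_0$ and $F_1$ preserves $N^*V_1$ at every time, and removes $F_1$ by naturality; what remains is handled by concatenation and Proposition~\ref{vanish1}. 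You instead scale the whole path $\gamma\mapsto(1-s)\gamma$, producing a homotopy whose endpoint at $t=1$ genuinely moves, and you control this via the constancy of $\dim\bigl(B_s(1)N^*V_0\cap N^*V_1\bigr)$. That step is legitimate, but note that it is not the literal fixed-endpoint homotopy axiom of \cite{rs93}; what you are really using is concatenation around the square together with the \emph{zero axiom} (if the intersection dimension is constant along a path, its Maslov index vanishes) applied to the edge $s\mapsto B_s(1)N^*V_0$. Your route is a bit more direct conceptually, while the paper's stays entirely within the fixed-endpoint homotopy plus naturality toolkit.
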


\begin{proof}
By (\ref{fs}), there are continuous paths $\alpha:[0,1] \rightarrow
\mathrm{GL}(\R^n)$ and $\gamma: [0,1] \rightarrow \mathrm{L_s}(\R^n,
(\R^n)^*)$ such that $B=GA$ with
\[
G := \left( \begin{array}{cc} I & 0 \\ \gamma & I \end{array} \right),
\quad A:=  \left( \begin{array}{cc} \alpha^{-1} & 0 \\ 0 & \alpha^T
  \end{array} \right).
\]
The assumptions on $B(0)$ and $B(1)$ and the special form of $G$ and
$A$  imply that
\begin{equation}
\label{fix}
A(0) N^* V_0 = G(0) N^* V_0 = N^* V_0, \quad
A(1) N^* V_1 = G(1) N^* V_1 = N^* V_1.
\end{equation}
The affine path $F(t) := t G(1) + (1-t) G(0)$ is homotopic with fixed
end-points to the path $G$ within the symplectic group
$\mathrm{Sp}(T^*  \R^n)$, so by the homotopy property of the Maslov index
\begin{equation}
\label{uno}
\mu(B N^* V_0,N^* V_1) = \mu(GA N^* V_0,N^* V_1) = \mu (F A N^* V_0,
N^*  V_1).
\end{equation}
We can write $F(t)$ as $F_1 (t) F_0 (t)$, where $F_0$ and $F_1$ are
the  symplectic paths
\[
F_0(t) := \left( \begin{array}{cc} I & 0 \\ (1-t) \gamma(0) & I
  \end{array}  \right), \quad
F_1(t) := \left( \begin{array}{cc} I & 0 \\ t \gamma(1) & I
  \end{array}  \right).
\]
We note that $F_0(t)$ preserves $N^* V_0$, while $F_1(t)$ preserves
$N^*  V_1$, for every $t\in [0,1]$. Then, by the naturality property
of  the Maslov index
\begin{equation}
\label{due}
\begin{split}
\mu (F A N^* V_0, N^* V_1) = \mu ( F_1 F_0 A N^* V_0, N^* V_1) \\ =
\mu( F_0 A N^* V_0, F_1^{-1} N^* V_1) = \mu( F_0 A N^* V_0, N^* V_1).
\end{split} \end{equation}
By the concatenation property of the Maslov index, (\ref{tn}),
(\ref{fix}), and the fact that $F_0(t)$ preserves $N^* V_0$ for every
$t$ and is the identity for $t=1$, we have the chain of equalities
\begin{equation}
\label{tre}
\begin{split}
\mu( F_0 A N^* V_0, N^* V_1) = \mu(F_0 A(0) N^*V_0, N^* V_1) +
\mu(F_0(1)  A N^*V_0 , N^* V_1) \\ = \mu(F_0 N^*V_0, N^* V_1) + \mu(A
N^*V_0  , N^* V_1) = \mu(N^*V_0, N^*V_1) + \mu (N^* (\alpha^{-1} V_0),
N^*V_1) = 0,
\end{split} \end{equation}
where the latter term vanishes because of Proposition \ref{vanish1}.
The conclusion follows from (\ref{uno}), (\ref{due}), and (\ref{tre}).
\qed \end{proof}

We conclude this section by discussing how graphs of symplectic
automorphisms of $T^* \R^n$ can be turned into Lagrangian subspaces of
$T^* \R^{2n}$.

Let us identify the product $T^* \R^n \times T^* \R^n$ with $T^*
\R^{2n}$.  Then the graph of the linear involution $C$ is the conormal
space of the diagonal $\Delta_{\R^n}$ in $\R^n \times \R^n$,
\[
\graf C = N^* \Delta_{\R^n}.
\]
Moreover, the fact that $C$ is anti-symplectic easily implies that
a linear endomorphism $A : T^* \R^n \rightarrow T^* \R^n$ is
symplectic if  and only if the graph of $CA$ is a Lagrangian
subspace\footnote{Here $T^* \R^{2n}$ is endowed with its standard
  symplectic  structure. In symplectic geometry it is also customary
  to  endow the product of a symplectic vector space $(V,\omega)$ with
  itself by the symplectic structure $\omega \times (-\omega)$. With
  the  latter convention, the product of two Lagrangian subspaces is
  Lagrangian, and an endomorphism is symplectic if and only if its
  graph is  Lagrangian. When dealing with cotangent spaces and
  conormal  spaces it seems more convenient to adopt the former
  convention,  even if it involves the appearance of the involution
  $C$.}  of $T^* \R^{2n}$, which is true if and only if the graph of $AC$ is a
Lagrangian  subspace of $T^* \R^{2n}$.

Theorem 3.2 in \cite{rs93} implies that if $A$ is a path of symplectic
automorphisms of $T^* \R^n$ and $\lambda$, $\nu$ are paths of
Lagrangian  subspaces of $T^* \R^n$, then
\begin{equation}
\label{prd}
\mu( A \lambda, \nu) = \mu ( \graf AC, C\lambda \times \nu) = -
\mu(\graf  CA, \lambda \times C\nu).
\end{equation}

\section{Conormal bundles}

Let $M$ be a smooth manifold\footnote{Unless otherwise stated, manifolds and submanifolds are always assumed to have no boundary.} of dimension $n$, 
and let $T^*M$ be the
cotangent bundle of $M$ with projection $\tau^* : T^*M \to M$. The
cotangent bundle $T^*M$ carries the following canonical structures:
The {\em Liouville one-form\/} $\theta$ and the {\em Liouville
vector field\/} $\eta$ which can be defined by
\[
\theta(x) [\zeta] = x\big[ D\tau^*(x)[\zeta]\big]=
d\theta (x) [\eta, \zeta],  \qquad \ \forall\, x\in T^*M, \; \forall
\zeta  \in T_xT^*M,
\]
and the symplectic structure $\omega=d\theta$. Elements of $T^* M$ are
also denoted as pairs $(q,p)$, with $q\in M$, $p\in T_q^* M$. 

The {\em vertical subbundle} is the $n$-dimensional vector subbundle
of  $TT^*M$ whose fiber at $x\in T^* M$ is the linear subspace
\[
T_x^v T^*M := \ker D\tau^* (x)\subset T_x T^*M.
\]
Each vertical subspace $T_x^v T^*M$ is a Lagrangian subspace of the
symplectic vector space $(T_x T^*M,\omega_x)$.

If $Q$ is a smooth submanifold of $M$, the {\em conormal bundle} of
$Q$ is  defined by
\[
N^* Q := \set{x\in T^*M}{\tau^* (x)\in Q, \; x[\xi] = 0 \; \forall \xi
  \in  T_{\tau^*(x)} Q}.
\]
It inherits the structure of a vector bundle over $Q$ of dimension
$\codim  \, Q$. The conormal bundle of
the  whole $M$ is the zero-section, while the conormal bundle of a
point  $Q=\{q\}$ is $T^*_q M$. Moreover, the Liouville one-form $\theta$ vanishes identically on $N^* Q$, in particular $N^* Q$ is a Lagrangian 
submanifold of $T^*M$, i.e.\ its tangent space at every point $x$ is a
Lagrangian  subspace of $(T_x T^*M,\omega_x)$. Actually, the converse is also true:

\begin{prop}
\label{conorcar}
Let $M$ be a smooth $n$-dimensional manifold and let $L$ be an $n$-dimensional submanifold of $T^*M$ on which the Liouville one-form $\theta$ vanishes identically. Then the intersection of $L$ with the zero-section of $T^*M$ is a smooth submanifold $R$, and if $L$ is a closed subset of $T^*M$ then $L=N^* R$.
\end{prop}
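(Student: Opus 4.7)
The plan is to exploit the Liouville vector field $\eta$ on $T^*M$, which by the definition given in the excerpt satisfies $\theta(\zeta)=\omega(\eta,\zeta)$. The hypothesis $\theta|_L\equiv 0$ reads $\omega(\eta(x),\zeta)=0$ for all $x\in L$ and $\zeta\in T_xL$. Since $\theta|_L=0$ forces $\omega|_L=d\theta|_L=0$ and $\dim L=n$, each $T_xL$ is Lagrangian and hence coincides with its own symplectic orthogonal; therefore $\eta(x)\in T_xL$. So $\eta$ is tangent to $L$, and $L$ is invariant under the Liouville flow $\phi_s$, which in local fiber coordinates on $T^*M$ acts by $(q,p)\mapsto (q,e^s p)$. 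For any $(q,p)\in L$ the whole orbit $\{(q,e^s p):s\in\R\}$ lies in $L$ and converges in $T^*M$ to $(q,0)$ as $s\to -\infty$; when $L$ is closed this limit lies in $L$, hence in $R$, so $\tau^*(L)=\tau^*(R)$. This reduces the global claim $L=N^*R$ to a local identification near $R$.

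The core step is a local normal form at a point $x_0=(q_0,0)\in R$. In Darboux coordinates identifying a neighborhood of $x_0$ with an open set of $T^*\R^n$, the fact that $\phi_s$ fixes $x_0$ implies that $T_{x_0}L$ is a Lagrangian subspace invariant under $D\phi_s(x_0)=\mathrm{diag}(I,e^s I)$. The two distinct eigenvalues force a splitting $T_{x_0}L=X\oplus X'$ with $X\subseteq\R^n$ and $X'\subseteq(\R^n)^*$, and the Lagrangian condition (via the argument of Remark \ref{linlag}) then gives $X'=X^\perp$. Choosing a linear complement $\R^n=X\oplus Y$ with the corresponding dual splitting $(\R^n)^*=Y^\perp\oplus X^\perp$, one can parametrize $L$ near $x_0$ as a graph over $T_{x_0}L=X\oplus X^\perp$ in the form $(u,\eta)\mapsto (u,v(u,\eta),\xi(u,\eta),\eta)$ with $v\in Y$ and $\xi\in Y^\perp$. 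Imposing $\phi_s$-invariance of $L$ forces $v(u,e^s\eta)=v(u,\eta)$ and $\xi(u,e^s\eta)=e^s\xi(u,\eta)$. The first relation, as $s\to -\infty$, gives $v(u,\eta)=g(u)$ independent of $\eta$; the second, combined with smoothness, forces $\xi(u,\eta)=h(u)\eta$ linear in $\eta$. The remaining content of $\theta|_L=0$, tested on the $\partial_u$-directions, then becomes $h(u)=-Dg(u)^T$, which is exactly the defining equation of $N^*Q$ for the smooth submanifold $Q=\{(u,g(u))\}\subseteq M$ cut out locally near $q_0$. Thus $L$ coincides with $N^*Q$ in this neighborhood, and $R$ is locally the smooth submanifold $Q$ viewed in the zero section.

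To pass to the global statement, observe that both $L$ and $N^*Q$ are $\phi_s$-invariant, so their coincidence on a neighborhood $U$ of $x_0$ propagates to coincidence on $\bigcup_{s\in\R}\phi_s(U)$, a full open neighborhood of the fiber of $N^*Q$ above $q_0$. Combined with the fact that every point of closed $L$ is joined to a point of $R$ by a Liouville orbit, these local identifications glue to give $L=N^*R$. The main anticipated obstacle is the local normal form itself: one must check carefully that smoothness combined with positive homogeneity of degree one in $\eta$ forces $\xi(u,\eta)$ to be linear in $\eta$, and then unwind $\theta|_L=0$ to recognize it as the conormal bundle condition $h(u)=-Dg(u)^T$. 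Once this is in place, the rest of the argument is essentially formal.
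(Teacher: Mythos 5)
Your proof is correct, and it shares with the paper the central idea that the Liouville vector field $\eta$ is tangent to $L$ (so $L$ is locally invariant under the scaling flow $\phi_s$), but it organizes the local analysis quite differently. The paper parametrizes $L$ near a point as a Lagrangian graph $(q,p)\mapsto(q,Q(q,p),P(q,p),p)$ and translates $\theta|_L=0$ into the identity $P_j(q,p)+\sum_h p_h\,\partial Q_h/\partial q_j=0$; this identity is used twice, first to show $P(q,0)=0$ (giving smoothness of $R$ without any closedness hypothesis) and again, after flowing into the graph neighborhood, to verify the conormal condition $x[\zeta]=0$, concluding with $L\subset N^*R$ and a dimension-plus-connectedness argument to get equality. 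You instead choose the graph parametrization in coordinates adapted to $T_{x_0}L=X\oplus X^{\perp}$ (exploiting the $D\phi_s$-eigenspace splitting at a zero-section point), front-load the flow-invariance to extract positive homogeneity of degree $1$ in the fiber direction, invoke smoothness to upgrade this to genuine linearity $\xi(u,\eta)=h(u)\eta$, and only then use $\theta|_L=0$ once to identify $h=-Dg^{T}$; your globalization then proves both inclusions $L\subset N^*R$ and $N^*R\subset L$ directly by flowing into the normal-form neighborhood, avoiding the connectedness step. The homogeneity route is arguably cleaner conceptually, at the cost of the (real, but standard) lemma that a $C^1$ function positively homogeneous of degree one near the origin is linear. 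One small wording issue: in your first paragraph you assert that the \emph{whole} Liouville orbit of a point of $L$ lies in $L$ before invoking closedness; the tangency of $\eta$ only gives local invariance, and closedness is already needed to extend the orbit to all of $\R$, not merely to place the backward limit in $L$. This does not affect the argument, since you do assume closedness throughout the global part, but the order of quantification should be tidied.
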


\begin{proof}
{\em Claim 1. The intersection $R:= L\cap M$, where $M$ denotes the zero-section of $T^*M$, is a smooth submanifold.} The matter being local, up to choosing suitable local coordinates $q_1,\dots,q_n$ and conjugate coordinates $p_1,\dots,p_n$, we may assume that $M=\R^n$ and $L$ is a graph of the form
\begin{equation}
\label{locdes}
L = \set{(q,Q(q,p),P(q,p),p)}{q \in \R^k, \; p \in (\R^k)^{\perp} },
\end{equation}
where $\R^k$ denotes the subspace of $\R^n$ spanned by the first $k$ vectors of the standard basis, $0\leq k \leq n$, $Q$ is a smooth map into $\R^{n-k}$ -- the subspace of $\R^n$ spanned by the last $n-k$ vectors of the standard basis -- and $P$ is a smooth map into $(\R^{n-k})^{\perp}$. Here we are also using the fact that $L$ is Lagrangian, so that its tangent space at a given point can be represented as in Remark \ref{linlag}. 
Then the intersection of $L$ with the zero-section is the set
\begin{equation}
\label{inters}
R = L \cap ( \R^n \times (0) ) = \set{(q,Q(q,0),0,0)}{q\in \R^k \mbox{ such that } P(q,0)=0}.
\end{equation}
The fact that the Liouville one-form $\theta$ vanishes on $L$ is equivalent to the fact that the maps $Q=(Q_{k+1},\dots,Q_n)$ and $P=(P_1,\dots,P_k)$ satisfy the identity
\[
\sum_{j=1}^k P_j(q,p) \, dq_j + \sum_{j=k+1}^n p_j \, dQ_j (q,p) = 0, \quad \forall (q,p) \in \R^k \times (\R^k)^{\perp},
\]
from which we deduce the identities
\begin{equation}
\label{tv1}
P_j(q,p) + \sum_{h=k+1}^n p_h \frac{\partial Q_h}{\partial q_j} (q,p)  = 0,  \quad \forall (q,p) \in \R^k \times (\R^k)^{\perp}, \quad \forall j=1,\dots,k.
\end{equation}
In particular, (\ref{tv1}) implies that $P(q,0)=0$ for every $q\in \R^k$, so by (\ref{inters}) the intersection of $L$ with the zero-section is
\begin{equation}
\label{desR}
R = L \cap ( \R^n \times (0) ) = \set{(q,Q(q,0),0,0)}{q\in \R^k},
\end{equation}
which is a smooth submanifold.

\medskip

\noindent{\em Claim 2. The Liouville vector field $\eta$ is tangent to $L$.} In fact, for every $x\in L$ and every $\zeta\in T_x L$ we have
\[
\omega(x) [\eta(x),\zeta] = \theta(x)[\zeta] = 0,
\]
so $\eta(x)$ is in the symplectic orthogonal space of $T_x L$. But since $L$ is a Lagrangian submanifold, such a symplectic orthogonal space is $T_x L$ itself. 

\medskip

\noindent{\em Claim 3. If, moreover, $L$ is a closed subset of $T^*M$, then $L$ is contained in $N^* R$.} Let $x$ be a point in $L$. The orbit of $x$ by the Liouville flow -- that is, the flow of the Liouville vector field $\eta$ -- converges to the point in the zero-section $(\tau^*(x),0)$ for $t\rightarrow -\infty$. By Claim 2 and by the fact that $L$ is a closed subset, we deduce that $(\tau^*(x),0)$ belongs to $L$, hence to $R$. Since both $N^* R$ and $L$ are invariant with respect to the Liouville flow, we may assume that $x$ is so close to the zero-section that it lies in the portion of $L$ which is locally described by (\ref{locdes}). Then $x$ is of the form $(q,Q(q,p),P(q,p),p)$, for some $q\in \R^k$ and $p\in (\R^k)^{\perp}$. Since the Liouville flow is 
equivariant with respect to cotangent bundle charts, $\tau^*(x)$ is the point $(q,Q(q,p))$. 
By (\ref{desR}), $\tau^*(x) = (q,Q(q,p)) = (q,Q(q,0))$, and by differentiating this identity with respect to $q_j$ we find
\[
\frac{\partial Q}{\partial q_j} (q,p) = \frac{\partial Q}{\partial q_j} (q,0), \quad \forall j=1,\dots,k.
\]
Therefore, any element $\zeta$ of $T_{\tau^*(x)} R$ is of the form  
\[
\zeta = (\xi,DQ(q,0)[\xi]) = (\xi, DQ(q,p)[\xi]),
\]
for some $\xi\in \R^k$.
Using (\ref{tv1}) again, we get
\begin{eqnarray*}
x[\zeta] = \sum_{j=1}^k P_j(q,p) \xi_j + \sum_{j=k+1}^n p_j \sum_{h=1}^k \frac{\partial Q_j}{\partial q_h} (q,p) \xi_h = \sum_{j=1}^k P_j(q,p) \xi_j + \sum_{h=1}^k \xi_h \sum_{j=k+1}^n  p_j \frac{\partial Q_j}{\partial q_h} (q,p)  \\
= \sum_{j=1}^k \left( P_j(q,p) + \sum_{h=k+1}^n  p_h \frac{\partial Q_h}{\partial q_j} (q,p)  \right) \xi_j = 0.
\end{eqnarray*}
Therefore, $x$ belongs to $N^* R$, as claimed.

\medskip

\noindent{\em Conclusion.} Since $L$ is an $n$-dimensional submanifold of the $n$-dimensional manifold $N^* R$ and it is a closed subset, it is a union of connected components of $N^* R$. But since $R$ is contained in $L$ and the conormal bundle of a connected submanifold is connected, we conclude that $L = N^* R$.
\end{proof} \qed   

\section{Hamiltonian systems on cotangent bundles with conormal
  boundary  conditions}
\label{hscbcbs}

Let $M$ be a smooth manifold of dimension $n$.
A smooth Hamiltonian $H : [0,1] \times T^*M \to \R$ induces a time
dependent vector field $X_H$  on $T^*M$ defined by
\[
\omega\big(X_H(t,x),
\zeta\big)\, = \, -D_x H(t,x)[\zeta], \quad \forall\  \zeta \in
T_xT^*M.
\]
We denote by $\phi_t^H$ the non-autonomous flow determined by the ODE
\begin{equation}
\label{ham}
x'(t) = X_H(t, x(t)).
\end{equation}

\paragraph{Local boundary conditions.}
Let $Q_0$ and $Q_1$ be submanifolds of $M$. We are interested in the
set of solutions $x:[0,1]\rightarrow T^*M$ of the Hamiltonian
system (\ref{ham}) such that
\begin{equation}
\label{locbdry}
x(0) \in N^* Q_0, \quad x(1) \in N^* Q_1.
\end{equation}
In other words, we are considering Hamiltonian orbits 
$t\mapsto (q(t),p(t))$ such that $q(0)\in Q_0$, $q(1)\in Q_1$, $p(0)$
vanishes on $T_{q(0)} Q_0$, and $p(1)$ vanishes on $T_{q(1)} Q_1$.
In particular, when $Q_0 = \{q_0\}$ and $Q_1 = \{q_1\}$ are points, we
find  Hamiltonian orbits whose projection onto $M$ joins $q_0$ and
$q_1$,  without any other conditions. When $Q_0=Q_1=M$,
(\ref{locbdry})  reduces to the Neumann boundary conditions $p(0)=p(1)=0$.

The {\em nullity} $\nu^{Q_0,Q_1}(x)$ of the solution $x$ of
(\ref{ham}-\ref{locbdry}) is the non-negative integer
\[
\nu^{Q_0,Q_1}(x) = \dim D\phi_1^H (x(0)) T_{x(0)} N^* Q_0 \cap
T_{x(1)}  N^* Q_1,
\]
and $x$ is said to be {\em non-degenerate} if $\nu^{Q_0,Q_1}(x)=0$, or
equivalently if $\phi_1^H(N^* Q_0)$ is transverse to $N^* Q_1$ at $x(1)$.

We wish to associate a Maslov index to each solution of the boundary
problem (\ref{ham}-\ref{locbdry}). If $x:[0,1]\rightarrow T^*M$ is
such a  solution, let $\Phi$ be a {\em vertical preserving symplectic
  trivialization} of the symplectic bundle $x^*(TT^*M)$. That is, for every
$t\in  [0,1]$, $\Phi(t)$ is a symplectic linear isomorphism from
$T_{x(t)}  T^*M$ to $T^* \R^n$,
\[
\Phi(t) : T_{x(t)} T^*M \rightarrow T^* \R^n,
\]
 which maps $T^v_{x(t)} T^*M$ onto the vertical subspace $N^*(0)=(0)
 \times (\R^n)^*$, and the dependence of $\Phi$ on $t$ is smooth. Moreover, we
 assume  that the tangent spaces of the conormal bundles of $Q_0$ and
 $Q_1$  are mapped into conormal subspaces of $T^* \R^n$:
\begin{equation}
\label{conincon}
\Phi(0) T_{x(0)} N^* Q_0 \in \mathscr{N}^*(\R^n) \quad \mbox{and}
\quad  \Phi(1) T_{x(1)} N^* Q_1\in \mathscr{N}^*(\R^n).
\end{equation}
Let $V_0^{\Phi}$ and $V_1^{\Phi}$ be the linear subspaces of $\R^n$
defined  by
\[
N^* V_0^{\Phi} =   \Phi(0) T_{x(0)} N^* Q_0, \quad
N^* V_1^{\Phi} =   \Phi(1) T_{x(1)} N^* Q_1.
\]
The fact that $\Phi$ maps the vertical subbundle into the vertical
subspace  implies that $\dim V_0^{\Phi} = \dim Q_0$
and $\dim V_1^{\Phi} = \dim Q_1$. Since the flow $\phi_t^H$ is
symplectic,  the linear mapping
\begin{equation}
\label{g}
G^{\Phi}(t):= \Phi(t) D \phi_t^H (x(0)) \Phi(0)^{-1}
\end{equation}
is a symplectic automorphism of $T^*\R^n$. Notice that
\[
\nu^Q(x) = \dim G^{\Phi} (1) N^* V_0^{\Phi} \cap N^* V_1^{\Phi}.
\]

\begin{defn}
The Maslov index of a solution $x$ of (\ref{ham}-\ref{locbdry}) is the
half-integer
\[
\mu^{Q_0,Q_1}(x) := \mu( G^{\Phi} N^* V_0^{\Phi}, N^* V_1^{\Phi}) +
 \frac{1}{2} (\dim Q_0 + \dim Q_1 - n).
\]
\end{defn}
The shift
\[
\frac{1}{2} (\dim Q_0 + \dim Q_1 - n)
\]
comes from the fact that in the case of convex Hamiltonians we would
like  the Maslov index of a non-degenerate solution to coincide with
the  Morse index of the corresponding extremal curve of the Lagrangian
action functional (see Section \ref{morindsec} below).
The next result shows that the Maslov index of $x$ is well-defined:

\begin{prop}
\label{invdef}
Assume that $\Phi$ and $\Psi$ are two vertical preserving symplectic
trivializations of $x^*(TT^*M)$, and that they both satisfy        
(\ref{conincon}). Then
\begin{equation}
\label{indep}
\mu( G^{\Phi} N^* V_0^{\Phi}, N^* V_1^{\Phi}) = \mu( G^{\Psi} N^*  
V_0^{\Psi}, N^* V_1^{\Psi}).
\end{equation}
If $x$ is non-degenerate, the number $\mu^{Q_0,Q_1}(x)$ is an integer.
\end{prop}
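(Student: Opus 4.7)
I would start by introducing the transition path $T(t) := \Psi(t) \Phi(t)^{-1}$. Since both $\Phi$ and $\Psi$ are vertical-preserving, $T$ is a continuous path in $\mathrm{Sp_v}(T^* \R^n)$, and condition (\ref{conincon}) translates into $T(0) N^* V_0^\Phi = N^* V_0^\Psi$ and $T(1) N^* V_1^\Phi = N^* V_1^\Psi$. A direct computation from (\ref{g}) gives the conjugation relation $G^\Psi(t) = T(t) G^\Phi(t) T(0)^{-1}$, so combined with $T(0)^{-1} N^* V_0^\Psi = N^* V_0^\Phi$ we have $G^\Psi(t) N^* V_0^\Psi = T(t) G^\Phi(t) N^* V_0^\Phi$. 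Applying the naturality of the Maslov index,
\[
\mu(G^\Psi N^* V_0^\Psi, N^* V_1^\Psi) = \mu(T G^\Phi N^* V_0^\Phi, N^* V_1^\Psi),
\]
so (\ref{indep}) reduces to proving the identity
\[
\mu(T G^\Phi N^* V_0^\Phi, N^* V_1^\Psi) = \mu(G^\Phi N^* V_0^\Phi, N^* V_1^\Phi).
\]

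The plan for proving this identity is to decouple the contribution of $T$ from that of $G^\Phi$. Since $\mathrm{Sp}(T^* \R^n)$ is connected, the symplectic path $s \mapsto T(s) G^\Phi(s)$ is homotopic rel.\ endpoints to the concatenation of $s \mapsto T(0) G^\Phi(s)$ followed by $s \mapsto T(s) G^\Phi(1)$. Applying this homotopy to the pair $(\cdot \, N^* V_0^\Phi, N^* V_1^\Psi)$ and combining the homotopy invariance, concatenation, and naturality properties of the Maslov index, the left-hand side decomposes as
\[
\mu\bigl(G^\Phi N^* V_0^\Phi, T(0)^{-1} N^* V_1^\Psi\bigr) + \mu\bigl(T \cdot G^\Phi(1) N^* V_0^\Phi, N^* V_1^\Psi\bigr).
\]

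The hard part is then to show that this sum equals $\mu(G^\Phi N^* V_0^\Phi, N^* V_1^\Phi)$. I would further homotope the constant Lagrangian $T(0)^{-1} N^* V_1^\Psi$ to $N^* V_1^\Phi$ via an auxiliary $\mathrm{Sp_v}$-path whose endpoints are arranged, thanks to the decomposition (\ref{fs}), to stabilize the relevant conormal subspaces. Proposition \ref{vanish2} then shows that the extra Maslov index produced on the first summand by this homotopy exactly cancels the second summand $\mu(T \cdot G^\Phi(1) N^* V_0^\Phi, N^* V_1^\Psi)$, while any motion of the boundary data through conormal spaces contributes zero by Proposition \ref{vanish1}. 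The main obstacle is precisely the bookkeeping needed to choose the auxiliary path so that the hypotheses of Proposition \ref{vanish2} apply cleanly; once this is done, the claimed identity follows.

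For the integrality claim when $x$ is non-degenerate, I appeal to the parity identity $2\mu \equiv \dim(\lambda(0) \cap \lambda_0) + \dim(\lambda(1) \cap \lambda_0) \pmod{2}$, which is immediate from the definition (\ref{relmasl}) at regular crossings and extends by homotopy. Non-degeneracy forces $G^\Phi(1) N^* V_0^\Phi \cap N^* V_1^\Phi = (0)$, while at $t=0$ the intersection $N^* V_0^\Phi \cap N^* V_1^\Phi = (V_0^\Phi \cap V_1^\Phi) \times (V_0^\Phi + V_1^\Phi)^\perp$ has dimension congruent to $\dim V_0^\Phi + \dim V_1^\Phi + n \equiv n - \dim Q_0 - \dim Q_1 \pmod{2}$. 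Therefore $2\mu + \dim Q_0 + \dim Q_1 - n$ is even, and $\mu^{Q_0,Q_1}(x) \in \Z$.
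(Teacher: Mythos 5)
Your opening steps are fine: the conjugation relation $G^\Psi(t)=T(t)G^\Phi(t)T(0)^{-1}$ with $T=\Psi\Phi^{-1}\in\mathrm{Sp_v}$ is correct, and so is the reduction of~(\ref{indep}) to $\mu(TG^\Phi N^*V_0^\Phi,N^*V_1^\Psi)=\mu(G^\Phi N^*V_0^\Phi,N^*V_1^\Phi)$. Your integrality argument via the mod-$2$ reduction of the crossing formula is also correct and is a reasonable substitute for the paper's citation of Corollary 4.12 in \cite{rs93}.

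The problem is in the ``hard part,'' and it is not bookkeeping. If one runs the rectangle argument with the two-parameter family $(s,t)\mapsto T(s)G^\Phi(t)N^*V_0^\Phi$ against the fixed Lagrangian $N^*V_1^\Psi$, the boundary identity gives
\[
\mu(G^\Phi N^*V_0^\Phi, N^*V_1^\Phi)=\mu\bigl(G^\Phi N^*V_0^\Phi,T(0)^{-1}N^*V_1^\Psi\bigr)+\mu\bigl(TG^\Phi(1)N^*V_0^\Phi,N^*V_1^\Psi\bigr)-\mu\bigl(TN^*V_0^\Phi,N^*V_1^\Psi\bigr),
\]
so your claimed cancellation is equivalent to $\mu(TN^*V_0^\Phi,N^*V_1^\Psi)=0$. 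This does \emph{not} follow from Proposition~\ref{vanish2}: that proposition requires $B(0)$ to preserve $N^*V_0$ and $B(1)$ to preserve $N^*V_1$, but here $T(0)N^*V_0^\Phi=N^*V_0^\Psi\neq N^*V_0^\Phi$ in general, $T(1)$ does not preserve $N^*V_1^\Psi$, and $T(1)N^*V_0^\Phi$ need not even be a conormal subspace. Replacing $T^{-1}$ by some other auxiliary $\mathrm{Sp_v}$-path $S$ with the same endpoints does not resolve this: to transfer the Maslov indices between $S^{-1}$ and $T$ you need them to be homotopic rel endpoints in $\mathrm{Sp_v}$, which is a genuine constraint since $\mathrm{Sp_v}(T^*\R^n)$ is not simply connected, and the residual term $\mu(SN^*V_1^\Psi,\cdot)$ again falls outside the scope of Propositions~\ref{vanish1}--\ref{vanish2} unless $S$ is chosen to map conormals to conormals.

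The paper avoids this by a two-step reduction you have essentially collapsed. It first proves~(\ref{indep}) under the extra hypothesis $V_0^\Phi=V_0^\Psi$, $V_1^\Phi=V_1^\Psi$, in which case $B=T$ does satisfy the hypotheses of Proposition~\ref{vanish2} and the bottom edge of the rectangle vanishes. For the general case it builds an intermediate trivialization $\Theta=A_1A_0\Phi$ where $A_0,A_1$ are \emph{block-diagonal} paths $\diag(\alpha^{-1},\alpha^T)$ in $\mathrm{Sp_v}$ designed so that $\Theta$ and $\Psi$ fall into the special case; the comparison between $\Theta$ and $\Phi$ then produces a left-over term $\mu(A_1(1)^{-1}A_1A_0N^*V_0^\Phi,N^*V_1^\Phi)$ which vanishes by Proposition~\ref{vanish1} precisely because, by~(\ref{tn}), block-diagonal elements carry every conormal to a conormal. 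That is the structural ingredient your outline is missing: a way to route the general-endpoint comparison entirely through paths of conormals.
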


\begin{proof}
Since both $\Phi$ and $\Psi$ are vertical preserving, the path $B(t):=
\Psi(t) \Phi(t)^{-1}$ takes values into the subgroup
$\mathrm{Sp_v}(T^*  \R^n)$. We first prove the identity (\ref{indep})
under the extra assumption
\begin{equation}
\label{coinc}
V_0^{\Phi} = V_0^{\Psi} = V_0, \quad V_1^{\Phi} = V_1^{\Psi} = V_1.
\end{equation}
In this case,
\begin{equation}
\label{pres}
B(0) N^* V_0 = N^* V_0, \quad B(1) N^* V_1 = N^* V_1.
\end{equation}
Consider the homotopy of Lagrangian subspaces
\[
\lambda(s,t) := B(s) G^{\Phi}(st) N^* V_0.
\]
By the concatenation and the homotopy property of the Maslov index,
\begin{equation}
\label{conhom}
\mu(\lambda|_{[0,1] \times \{0\}},N^* V_1) + \mu(\lambda|_{\{1\}
  \times  [0,1]},N^* V_1) = \mu(\lambda|_{\{0\} \times [0,1]},N^* V_1)
  +  \mu(\lambda|_{[0,1] \times \{1\}},N^* V_1).
\end{equation}
Since $\lambda(0,t) = B(0) N^* V_0$ is constant in $t$,
\begin{equation}
\label{l1}
\mu(\lambda|_{\{0\} \times [0,1]}, N^* V_1) = 0.
\end{equation}
By the naturality of the Maslov index and since $B(1)$ preserves $N^* V_1$,
\begin{equation}
\label{l2}
\mu(\lambda|_{\{1\} \times [0,1]},N^* V_1) = \mu (B(1) G^{\Phi} N^*
V_0,  N^* V_1) = \mu ( G^{\Phi} N^* V_0, N^* V_1).
\end{equation}
Moreover,
\begin{equation}
\label{l3}
\mu(\lambda|_{[0,1]\times \{0\}}, N^* V_1) = \mu(B N^* V_0,  N^* V_1) = 0,
\end{equation}
because of (\ref{pres}) and Proposition \ref{vanish2}. Finally,
\begin{equation}
\label{l4}
\mu(\lambda|_{[0,1]\times \{1\}}, N^* V_1) = \mu ( B G^{\Phi} N^* V_0,
N^* V_1) = \mu ( G^{\Psi}  N^* V_0, N^* V_1).
\end{equation}
Then (\ref{conhom}) together with (\ref{l1}), (\ref{l2}), (\ref{l3}),
and  (\ref{l4}) imply the identity (\ref{indep}) under the extra
assumption (\ref{coinc}). 

Now we deal with the general case. Let $\alpha_0,\alpha_1: [0,1]  
\rightarrow \mathrm{GL}(\R^n)$ be continuous paths such that
\[
\alpha_0(1)= \alpha_1(0) = I, \quad \alpha_0(0) V_0^{\Psi} =
V_0^{\Phi},  \quad \alpha_1(1) V_1^{\Psi} = V_1^{\Phi}.
\]
Consider the paths in $\mathrm{Sp_v}(T^* \R^n)$
\[
A_0 = \left( \begin{array}{cc} \alpha_0^{-1} & 0 \\ 0 & \alpha_0^T
  \end{array} \right), \quad 
A_1 = \left( \begin{array}{cc} \alpha_1^{-1} & 0 \\ 0 & \alpha_1^T   
\end{array} \right).
\]
Then $A_0(1)=A_1(0)=I$, and by (\ref{tn})
\[
A_0(0) N^* V_0^{\Phi} = N^* V_0^{\Psi}, \quad
A_1(1) N^* V_1^{\Phi} = N^* V_1^{\Psi}.
\]
The trivialization $\Theta(t):= A_1(t) A_0(t) \Phi(t)$ is vertical
preserving,  and
\begin{eqnarray*}
\Theta(0) T_{x(0)} N^* Q_0 = A_1(0) A_0(0) \Phi(0) T_{x(0)} N^* Q_0 =
A_0(0)  N^* V_0^{\Phi} = N^* V_0^{\Psi}, \\
\Theta(1) T_{x(1)} N^* Q_1 = A_1(1) A_0(1) \Phi(1) T_{x(1)} N^* Q_1 =
A_1(1)  N^* V_1^{\Phi} = N^* V_1^{\Psi}.
\end{eqnarray*}
Therefore, $\Theta$ is an admissible trivialization with $V_0^{\Theta}
=  V_0^{\Psi}$ and $V_1^{\Theta} = V_1^{\Psi}$. By the particular case
treated above,
\[
\mu (G^{\Theta} N^*V_0^{\Theta}, N^* V_1^{\Theta}) = \mu (G^{\Psi}
N^*V_0^{\Psi}, N^* V_1^{\Psi}),
\]
so it is enough to prove that the left-hand side coincides with 
$\mu (G^{\Phi} N^*V_0^{\Phi}, N^* V_1^{\Phi})$. By the naturality
property  of the Maslov index,
\[
\mu (G^{\Theta} N^*V_0^{\Theta}, N^* V_1^{\Theta}) = \mu ( A_1 A_0
G^{\Phi}  N^*V_0^{\Phi}, A_1(1) N^* V_1^{\Phi}) =  \mu ( A_1(1)^{-1}
A_1 A_0  G^{\Phi} N^*V_0^{\Phi}, N^* V_1^{\Phi}).
\]
By the concatenation property of the Maslov index, the latter quantity
coincides with
\begin{eqnarray*}
\mu( A_1(1)^{-1} A_1 A_0 G^{\Phi}(0) N^* V_0^{\Phi}, N^* V_1^{\Phi}) +
\mu ( A_1(1)^{-1} A_1(1) A_0(1) G^{\Phi} N^* V_0^{\Phi}, N^*
V_1^{\Phi}) \\  = \mu( A_1(1)^{-1} A_1 A_0 N^* V_0^{\Phi}, N^*
V_1^{\Phi})  + \mu (G^{\Phi} N^* V_0^{\Phi}, N^* V_1^{\Phi}).
\end{eqnarray*}
By (\ref{tn}), $A_1(1)^{-1} A_1(t) A_0(t) N^* V_0^{\Phi}$ is a
conormal  subspace of $T^* \R^n$ for every $t\in [0,1]$, so the first
term after the equal sign in the expression above vanishes because of Proposition
\ref{vanish1}. The identity (\ref{indep}) follows.

If $x$ is non-degenerate, $G^{\Phi}(1) V_0^{\Phi} \cap V_1^{\Phi} =
(0)$, whereas the intersection $G^{\Phi}(0) V_0^{\Phi} \cap V_1^{\Phi}
= V_0^{\Phi} \cap V_1^{\Phi}$ might be non-trivial. By Corollary 4.12
in \cite{rs93}, the relative Maslov index $\mu(G^{\Phi} V_0^{\Phi},
 V_1^{\Phi})$ differs by an integer from the number $d/2$, where
\[
d := \dim N^* V_0^{\Phi} \cap N^* V_1^{\Phi}.
\]
Since
\[
N^* V_0^{\Phi} \cap N^* V_1^{\Phi} = (V_0^{\Phi} \cap V_1^{\Phi})
\times  ({V_0^{\Phi}}^{\perp} \cap {V_1^{\Phi}}^{\perp}) = (V_0^{\Phi}
\cap  V_1^{\Phi}) \times (V_0^{\Phi} + V_1^{\Phi})^{\perp},
\]
the  number
\[
d = \dim V_0^{\Phi} \cap V_1^{\Phi} + n - \dim (V_0^{\Phi} +
V_1^{\Phi}) =  \dim V_0^{\Phi} + \dim V_1^{\Phi} + n - 2 \dim
(V_0^{\Phi} +  V_1^{\Phi})
\]
has the parity of
\[
\dim Q_0 + \dim Q_1 - n = \dim V_0^{\Phi} + \dim V_1^{\Phi} - n.
\]
It follows that
\[
\mu^{Q_0,Q_1}(x) = \mu(G^{\Phi} V_0^{\Phi}, V_1^{\Phi}) + \frac{1}{2}
( \dim  Q_0 + \dim Q_1 - n)
\]
is an integer, as claimed.
\qed \end{proof}

\paragraph{Non-local boundary conditions.} The smooth involution
\[
\mathscr{C}: T^*M \rightarrow T^*M, \quad \mathscr{C}(x) = -x,
\]
is anti-symplectic, meaning that $\mathscr{C}^* \omega = -
\omega$. Its  graph in $T^*M \times T^*M = T^* M^2$ is the conormal
bundle  of the diagonal $\Delta_M$ of $M\times M$. Note also that
conormal  subbundles in $T^*M$ are $\mathscr{C}$-invariant.

Given a smooth submanifold $Q \subset M \times M$, we are interested
in the set of all solutions $x : [0,1] \to T^*M$ of (\ref{ham})
satisfying the nonlocal boundary condition
\begin{equation}
\label{nonlocbdry}
\big( x(0), - x(1)\big)\,  \in \, N^*Q.
\end{equation}
Equivalently, we are looking at the Lagrangian intersection problem
\[
\bigl( \graf \mathscr{C} \circ \phi_1^H \bigr) \cap N^* Q
\]
in $T^* M^2$. A solution $x$ of (\ref{ham}-\ref{nonlocbdry}) is called
{\em  non-degenerate} if the above intersection is transverse at
$(x(0), -x(1))$, or equivalently if the {\em nullity} of $x$, defined as
\[
\nu^Q(x) := \dim \bigl( T_{(x(0),-x(1))} \graf \mathscr{C} \circ
\phi_1^H  \bigr) \cap T_{(x(0),-x(1))} N^*Q,
\]
is zero.

When $Q=Q_0 \times Q_1$ is the product of two submanifolds $Q_0$,
$Q_1$ of  $M$, the boundary condition (\ref{nonlocbdry}) reduces to
the  local boundary condition (\ref{locbdry}).  A common choice for
$Q$ is  the diagonal $\Delta_M$ in $M\times M$: this choice produces
1-periodic Hamiltonian orbits (provided that $H$ can be extended to
$\R \times T^* M$ as a $1$-periodic function).  Other choices are also
interesting: for instance in \cite{as08} it is shown that the
pair-of-pants  product on Floer homology for periodic orbits on the
cotangent bundle of $M$ factors through a Floer homology for
Hamiltonian  orbits on $T^* (M\times M)$ with nonlocal boundary
condition  (\ref{nonlocbdry}) given by the submanifold $Q$ of
$M\times M \times M \times M$ consisting of all 4-uples $(q,q,q,q)$.

The nonlocal boundary value problem (\ref{ham}-\ref{nonlocbdry}) on
$T^*M$  can be turned into a local boundary value problem on $T^* M^2
=  T^*M \times T^*M$. Indeed, a curve $x: [0,1] \rightarrow T^*M$ is
an  orbit for the Hamiltonian vector field $X_H$ on $T^*M$ if and only
if  the curve
\[
y: [0,1] \rightarrow T^* M^2, \quad y(t) = \bigl( x(t/2), -x(1-t/2)\bigr),
\]
is an orbit for the Hamiltonian vector field $X_K$ on $T^*M^2$, where
$K\in  C^{\infty}([0,1]\times T^*M^2)$ is the Hamiltonian
\[
K(t,y_1,y_2) := \frac{1}{2} H(t/2,y_1) + \frac{1}{2} H(1-t/2,-y_2).
\]
By construction,
\[
y(1) = \bigl( x(1/2), - x(1/2)) \in \graf \mathscr{C} = N^* \Delta_M,
\]
and the curve $x$ satisfies the nonlocal boundary condition
(\ref{nonlocbdry})  if and only if
\[
y(0) = \bigl( x(0), - x(1)) \in N^* Q.
\]
Therefore, the nonlocal boundary value problem
(\ref{ham}-\ref{nonlocbdry})  for $x:[0,1] \rightarrow T^*M$ is
equivalent  to the following local boundary value problem for $y:
[0,1]  \rightarrow T^*M^2$:
\begin{eqnarray}
\label{kham}
& y'(t) = X_K(t,y(t)), &\\
\label{nbc}
& y(0) \in N^* Q, \quad y(1) \in N^* \Delta_M. &
\end{eqnarray}
Using the identity (\ref{flusso}) below, it is easy to show that
\[
\nu^{Q,\Delta_M}(y) = \nu^Q (x).
\]
In particular, $x$ is a non-degenerate solution of (\ref{ham}-  
\ref{nonlocbdry}) if and only if $y$ is non-degenerate solution of 
(\ref{kham}-\ref{nbc}). We define the Maslov index of the solution $x$
of  (\ref{ham}-\ref{nonlocbdry}) as the Maslov index of the solution
$y$ of  (\ref{kham}-\ref{nbc}):
\[
\mu^Q(x) := \mu^{Q,\Delta_M} (y).
\]
It is also convenient to have a formula for the latter Maslov index
which  avoids the above local reformulation.

\begin{prop}
\label{locfor}
Assume that $\Phi$ is a vertical preserving symplectic trivialization of
$x^*(TT^*M)$, and that the linear subspace
\[
\bigl( \Phi(0) \times C \Phi(1) D\mathscr{C}(-x(1)) \bigr)
T_{(x(0),-x(1))}  N^* Q \subset T^* \R^n \times T^* \R^n = T^* \R^{2n}
\]
is a conormal subspace of $T^* \R^{2n}$, that we denote by $N^*
W^{\Phi}$,  with $W^{\Phi} \in \mathrm{Gr}(\R^{2n})$. Then
\begin{equation}
\label{nonlocfor}
\mu^Q(x) = \mu (\graf G^{\Phi} C, N^* W^{\Phi}) + \frac{1}{2} ( \dim Q - n),
\end{equation}
where $G^{\Phi}$ is defined by (\ref{g}). In particular, if $Q=Q_0
\times  Q_1$ with $Q_0$ and $Q_1$ smooth submanifolds of $M$, then
$\mu^Q(x)  = \mu^{Q_0,Q_1}(x)$.
\end{prop}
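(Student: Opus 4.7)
The strategy is to construct, from $\Phi$, a vertical preserving symplectic trivialization $\Psi$ of $y^*(TT^*M^2)$ that realizes $(Q,\Delta_M)$ as local boundary conditions, evaluate $\mu^{Q,\Delta_M}(y)$ via the definition from Section~\ref{hscbcbs}, and then match the resulting Maslov index with the graph formula on the right of~(\ref{nonlocfor}).

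Concretely, I would set
\[
\Psi(t) := \Phi(t/2) \oplus \bigl( C\, \Phi(1-t/2)\, D\mathscr{C}(-x(1-t/2)) \bigr).
\]
Each summand is vertical preserving, and $\Psi$ is symplectic since the two anti-symplectic contributions from $C$ and $D\mathscr{C}$ combine with the symplectic $\Phi$ to give a symplectic map. The hypothesis of the proposition is precisely $\Psi(0)T_{y(0)}N^*Q = N^*W^\Phi$, so $V_0^\Psi = W^\Phi$. At $t=1$, $y(1) = (x(1/2), -x(1/2)) \in \graf\mathscr{C}$; using $D\mathscr{C}^2 = \mathrm{Id}$ and $T_{(v,-v)}\graf\mathscr{C} = \{(\zeta, D\mathscr{C}(v)\zeta) : \zeta \in T_v T^*M\}$, one checks that $\Psi(1)T_{y(1)} N^*\Delta_M = \graf C = N^*\Delta_{\R^n}$, so $V_1^\Psi = \Delta_{\R^n}$. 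Using the identity~(\ref{flusso}), which expresses $D\phi^K_t$ in terms of $D\phi^H$ and $D\mathscr{C}$, the transported flow takes the block form
\[
G^\Psi(t) = G^\Phi(t/2) \oplus R(t), \qquad R(t) := C\,G^\Phi(1-t/2)\,G^\Phi(1)^{-1}\,C,
\]
and the definition of $\mu^{Q,\Delta_M}$, together with $\dim\Delta_M = n$, yields
\[
\mu^Q(x) = \mu^{Q,\Delta_M}(y) = \mu(G^\Psi N^*W^\Phi,\, N^*\Delta_{\R^n}) + \tfrac{1}{2}(\dim Q - n).
\]

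The main obstacle is then to establish the identity
\[
\mu(G^\Psi N^*W^\Phi,\, N^*\Delta_{\R^n}) = \mu(\graf G^\Phi C,\, N^*W^\Phi).
\]
The two Maslov indices record, at each time $t$, intersections of the same dimension: $(\alpha, C\alpha) \in G^\Psi(t) N^*W^\Phi \cap N^*\Delta_{\R^n}$ corresponds bijectively via $\alpha \mapsto G^\Phi(t/2)^{-1}\alpha$ to a point of $N^*W^\Phi$ lying on the graph of the anti-symplectic map $CM(t) := R(t)^{-1}CG^\Phi(t/2) = CG^\Phi(1)G^\Phi(1-t/2)^{-1}G^\Phi(t/2)$, and this intersection can in turn be matched with $\graf(G^\Phi(t) C) \cap N^*W^\Phi$. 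I would carry out this matching by a direct comparison of crossing forms under the bijection, then convert graphs of the form $\graf(CA)$ into $\graf(AC)$ via the anti-symplectic involution $C_{2n} := C \oplus C$, which leaves $N^*W^\Phi$ invariant, finally invoking~(\ref{prd}) and Propositions~\ref{vanish1} and~\ref{vanish2} to dispose of the correction terms produced by the portion of the symplectic motion of $G^\Psi$ that preserves $N^*\Delta_{\R^n}$. This is the most delicate step, because the two Lagrangian paths appearing above are \emph{not} pointwise equal, so the argument cannot be reduced to a pointwise identification.

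Finally, the ``in particular'' assertion for $Q = Q_0 \times Q_1$ is a direct consequence of~(\ref{nonlocfor}): in this case $W^\Phi = V_0^\Phi \times V_1^\Phi$, so $N^*W^\Phi = N^*V_0^\Phi \times N^*V_1^\Phi$, and applying~(\ref{prd}) with $A = G^\Phi$, $\lambda = N^*V_0^\Phi$, $\nu = N^*V_1^\Phi$ (using $CN^*V_0^\Phi = N^*V_0^\Phi$) gives $\mu(\graf G^\Phi C,\, N^*W^\Phi) = \mu(G^\Phi N^*V_0^\Phi, N^*V_1^\Phi)$. Substituting back and collecting the dimension shifts yields $\mu^Q(x) = \mu^{Q_0,Q_1}(x)$.
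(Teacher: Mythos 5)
Your construction of $\Psi$, the computation of $V_0^\Psi = W^\Phi$ and $V_1^\Psi = \Delta_{\R^n}$, the block form $G^\Psi(t) = G^\Phi(t/2) \oplus R(t)$ with $R(t) = C\,G^\Phi(1-t/2)\,G^\Phi(1)^{-1}\,C$, and the derivation of the ``in particular'' clause via (\ref{prd}) all agree exactly with the paper. You also correctly identify, via your bijection, that the intersections of $G^\Psi(t) N^*W^\Phi$ with $N^*\Delta_{\R^n}$ correspond to intersections of $N^*W^\Phi$ with $\graf CA(t)$, where $A(t) = G^\Phi(1)G^\Phi(1-t/2)^{-1}G^\Phi(t/2)$; this is the same path the paper exhibits as ${G^\Psi(t)}^{-1} N^*\Delta_{\R^n} = \graf CA(t)$.

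The gap is in how you propose to pass from this pointwise identification to the equality of Maslov indices. You flag it yourself as the ``most delicate step'' and sketch a crossing-form comparison followed by an appeal to Propositions~\ref{vanish1} and~\ref{vanish2} to absorb ``correction terms.'' That is not the right mechanism, and it is not clear it closes. The paper's argument is cleaner and uses a different key fact: after naturality, $\mu(G^\Psi N^*W^\Phi, N^*\Delta_{\R^n}) = \mu(N^*W^\Phi, \graf CA)$, and then one observes that $A$ is homotopic \emph{with fixed endpoints} to $G^\Phi$ inside $\mathrm{Sp}(T^*\R^n)$ --- an explicit homotopy built from the cocycle property (\ref{cocycle}) of the flow, of the form $A_s(t) = G^\Phi(1)\,G^\Phi\bigl(u(s,t)\bigr)^{-1}G^\Phi\bigl(v(s,t)\bigr)$ with $u(0,t)=1-t/2$, $v(0,t)=t/2$ deforming to $u=1$, $v=t$, keeping $A_s(0)=I$ and $A_s(1)=G^\Phi(1)$. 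This homotopy invariance is what lets one replace $\graf CA$ by $\graf CG^\Phi$; then the antisymmetry of $\mu$ and the $C\times C$ flip (\ref{clag}) give exactly $\mu(\graf G^\Phi C, N^*W^\Phi)$. Your proposal never identifies this homotopy, and the replacement by a crossing-form bijection plus conormal-path vanishing lemmas would have to be worked out from scratch; Propositions~\ref{vanish1}--\ref{vanish2} concern paths of conormal spaces and do not directly bear on graph Lagrangians of the form $\graf CA$. So, while the skeleton of your argument is the paper's, the load-bearing middle step is missing.
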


\begin{proof}
The isomorphisms
\[
\Psi(t) := \Phi(t/2) \times C \Phi(1-t/2) D\mathscr{C}(-x(1-t/2)) :
T_{y(t)}  T^* M^2 \rightarrow T^* \R^{2n}
\]
provide us with a vertical preserving symplectic trivialization of
$y^*(TT^* M^2)$. By assumption,
\[
\Psi(0) T_{y(0)} N^* Q = \bigl( \Phi(0) \times C \Phi(1)
D\mathscr{C}(-x(1))  \bigr)  T_{(x(0),-x(1))} N^* Q = N^* W^{\Phi}.
\]
Moreover, since $\mathscr{C}$ is an involution,
\begin{equation}
\label{diag}
\begin{split}
\Psi(1) T_{y(1)} N^* \Delta_M = \Psi(1) T_{(x(1/2),-x(1/2))} \graf
\mathscr{C} = \Psi(1) \graf D\mathscr{C}(x(1/2)) \\ =  \bigl(
\Phi(1/2)  \times C \Phi(1/2) D\mathscr{C}(-x(1/2)) \bigr)  \graf D
\mathscr{C}(x(1/2)) \\ = \set{(\Phi(1/2) \xi, C \Phi(1/2) \xi)}{\xi
  \in  T_{x(1/2)} T^*M}  =
\graf C = N^* \Delta_{\R^n}.
\end{split} \end{equation}
Therefore, $\Psi$ is an admissible trivialization of $y^*(TT^*M^2)$, and
\begin{equation}
\label{launo}
\mu^Q(x) = \mu^{Q,\Delta_M}(y) = \mu (G^{\Psi} N^* W^{\Phi}, N^*
\Delta_{\R^n} ) \\ + \frac{1}{2} (\dim Q + \dim \Delta_M - 2n),
\end{equation}
where $G^{\Psi}$ is defined as in (\ref{g}) by
\[
G^{\Psi}(t) := \Psi(t) D\phi_t^K(y(0)) \Psi(0)^{-1}.
\]
We denote by $\phi^H_{t,s}$ the solution of
\[
\left\{
\begin{array}{rcl} \phi^H_{s,s} (z) & = & z, \\
\partial_t \phi^H_{t,s} (z) & = &X_H(t,\phi^H_{t,s}(z)),
\end{array} \right. \quad \forall z\in T^*M, \; \forall s,t\in [0,1],
\]
and we omit the second subscript $s$ when $s=0$. By differentiating
the  identity
\[
\phi_{r,t}^H(\phi_{t,s}^H(z)) = \phi_{r,s}^H(z)\quad \forall z\in T^* M, \;
\forall r,s,t\in [0,1],
\]
we find
\begin{equation}
\label{cocycle}
D \phi_{r,t}^H (\phi_{t,s}^H(z)) D \phi_{t,s}^H (z) = D \phi_{r,s}^H (z) \quad
\forall z\in T^* M, \; \forall r,s,t\in [0,1].
\end{equation}
By construction, the flow of $X^K$ is related to the flow of $X^H$ by
the  formula
\[
\phi^K_t(y_1,y_2) = \bigl( \phi_{t/2}^H(y_1), - \phi_{1-t/2,1}^H(-y_2) \bigr).
\]
It follows that
\begin{equation}
\label{flusso}
D \phi_t^K(y(0)) = D \phi_{t/2}^H(x(0)) \times D \mathscr{C}(x(1-t/2))
D  \phi_{1-t/2,1}^H (x(1)) D\mathscr{C}(-x(1)),
\end{equation}
and
\[
G^{\Psi}(t) = \Phi(t/2) D \phi_{t/2}^H(x(0)) \Phi(0)^{-1} \times C
\Phi(1- t/2) D \phi^H_{1-t/2,1} (x(1)) \Phi(1)^{-1} C.
\]
By (\ref{cocycle}), the inverse of this isomorphism can be written as
\[
G^{\Psi}(t)^{-1} = \Phi(0) D \phi_{t/2}^H(x(0))^{-1} \Phi(t/2)^{-1}
\times C  \Phi(1) D \phi_{1,1-t/2}^H (x(1-t/2)) \Phi(1-t/2)^{-1} C.
\]
Then
\[
G^{\Psi}(t)^{-1} N^* \Delta_{\R^n} = G^{\Psi}(t)^{-1} \graf C \\ =
\graf C A(t),
\]
where $A$ is the symplectic path
\[
A(t):= \Phi(1) D \phi_{1,1-t/2}^H (x(1-t/2)) \Phi(1-t/2)^{-1}
\Phi(t/2) D  \phi_{t/2}^H(x(0)) \Phi(0)^{-1}.
\]
Note that
\[
A(0) = I, \quad A(1) = \Phi(1) D \phi_1^H(x(0)) \Phi(0)^{-1},
\]
and that this path is homotopic with fixed end-points to the path
$G^{\Phi}$  by the symplectic homotopy mapping $(s,t)$ into
\[
\Phi(1) D \phi_{1,1-\frac{t}{2}- s\frac{1-t}{2}}^H
\bigl(x(1-\frac{t}{2}- s \frac{1-t}{2})\bigr) \Phi\bigl(1 -
\frac{t}{2} - s \frac{1-t}{2}\bigr)^{-1} \Phi\bigl(\frac{t}{2} + s
\frac{1-t}{2}\bigr)  D \phi_{\frac{t}{2} + s\frac{1-t}{2}}^H(x(0))
\Phi(0)^{-1}.
\]
Therefore, by the naturality and the homotopy properties of the Maslov index,
\begin{equation}
\label{ladue}
\begin{split}
\mu (G^{\Psi} N^* W^{\Phi}, N^* \Delta_{\R^n}) = \mu (N^* W^{\Phi}, 
{G^{\Psi}}^{-1} N^* \Delta_{\R^n}) = \mu( N^* W^{\Phi}, \graf CA) \\ = 
\mu (N^*W^{\Phi},\graf C G^{\Phi}) = - \mu (\graf C G^{\Phi}, N^*W^{\Phi}).
\end{split}
\end{equation}
The conormal subspace $N^* W^{\Phi}$ is invariant with respect to the
anti-symplectic involution $C\times C$, while $(C \times C) \graf
CG^{\Phi}  = \graf G^{\Phi} C$. Then the identity (\ref{clag}) implies that
\begin{equation}
\label{latre}
\mu (\graf C G^{\Phi}, N^*W^{\Phi}) = - \mu (\graf G^{\Phi} C, N^* W^{\Phi}).
\end{equation}
Formulas (\ref{ladue}) and (\ref{latre}) imply
\begin{equation}
\label{laquattro}
\mu (G^{\Psi} N^* W^{\Phi} , N^* \Delta_{\R^n}) = \mu (\graf
G^{\Phi}C, N^*  W^{\Phi}).
\end{equation}
Identities  (\ref{launo}) and (\ref{laquattro}) imply (\ref{nonlocfor}).

If $Q=Q_0\times Q_1$, we have $N^* Q = N^* Q_0 \times N^* Q_1$,
and we  can choose a vertical preserving symplectic trivialization
$\Phi$ of  $x^*(TT^*M)$ such that
\[
\Phi(0) T_{x(0)} N^* Q_0 = N^* V_0^{\Phi}, \quad \Phi(1) T_{x(1)} N^*
Q_1 =  N^* V_1^{\Phi},
\]
with $V_0^{\Phi}$ and $V_1^{\Phi}$ in $\mathrm{Gr}(\R^n)$. It follows
that  $W^{\Phi} = V_0^{\Phi} \times V_1^{\Phi}$, and by the identity
(\ref{prd}) we have
\begin{equation}
\label{lacinque}
\mu ( \graf G^{\Phi} C, N^* W^{\Phi} ) = \mu (\graf G^{\Phi} C, N^* 
V_0^{\Phi} \times N^* V_1^{\Phi}) = \mu (G^{\Phi} N^* V_0^{\Phi}, N^* 
V_1^{\Phi}).
\end{equation}
By (\ref{nonlocfor}) and (\ref{lacinque}) we deduce that
\[
\mu^Q (x) = \mu (G^{\Phi} N^* V_0^{\Phi}, N^* V_1^{\Phi}) + \frac{1}{2}
(\dim Q_0 + \dim Q_1 - n),
\]
which is precisely $\mu^{Q_0,Q_1}(x)$. This concludes the proof.
\qed \end{proof}

\begin{rem} {\em (Periodic boundary conditions)}
Let us consider the particular case $Q=\Delta_M$ and $H$ 1-periodic in time, so that $x=(q,p)$ is a 1-periodic orbit. If the vector bundle $q^* (TM)$ over the circle $\R/\Z$ is orientable - hence trivial - there are vertical preserving trivializations of $x^*(TT^*M)$ which are 1-periodic in time. If $\Phi$ is such a trivialization, we have
\[
\bigl( \Phi(0) \times C \Phi(1) D\mathscr{C}(-x(1)) \bigr)
T_{(x(0),-x(1))}  N^* \Delta_M = N^* \Delta_{\R^n},
\]
see (\ref{diag}). So the trivialization $\Phi$ satisfies the assumption of Proposition \ref{locfor}, and the  Maslov index of the periodic orbit $x$ is
\[
\mu^{\Delta_M}(x) = \mu (\graf G^{\Phi} C, N^* \Delta_{\R^n}),
\]
which is precisely the Conley-Zehnder index $\mu_{CZ}(G^{\Phi})$ of
the  symplectic path $G^{\Phi}$. If $M$ is not orientable and $x=(q,p)$ is a
1-periodic orbit such that the vector bundle $q^*(TM)$ over $\R/\Z$ is not  
orientable, then there are no vertical preserving periodic
trivializations  of $x^*(TT^*M)$. In this case, any trivialization of $q^*(TM)$ over $[0,1]$ induces a non-periodic vertical preserving trivialization of $x^*(TT^*M)$ which satisfies the assumption of Proposition \ref{locfor}, and the Maslov index $\mu^{\Delta_M}(x)$ is still given by formula (\ref{nonlocfor}). 
Alternatively, one can identify
suitable  classes of non-vertical-preserving periodic trivializations
for  which the formula relating $\mu^{\Delta_M}(x)$ to the
Conley-Zehnder  index of the corresponding symplectic path involves
just a  correction term $+1$, as in \cite{web02}.
\end{rem}

\section{The dual Lagrangian formulation and the index theorem}
\label{morindsec}

In this section we assume that the Hamiltonian $H\in
C^{\infty}([0,1]\times  T^*M)$ satisfies the classical Tonelli
assumptions:  It is $C^2$-strictly convex and superlinear, that is,
\begin{eqnarray}
\label{convex}
D_{pp} H(t,q,p) > 0 \quad \forall (t,q,p)\in [0,1]\times T^*M, \\
\label{superlin}
\lim_{|p|\rightarrow \infty} \frac{H(t,q,p)}{|p|} = +\infty \quad 
\mbox{uniformly in } (t,q)\in [0,1]\times M.
\end{eqnarray}
Here the norm $|p|$ of the covector $p\in T_q^*M$ is induced by some
fixed  Riemannian metric on $M$. If $M$ is compact, the superlinearity
condition does not depend on the choice of such a metric.

Under these assumptions, the Fenchel transform defines a smooth, 
time-dependent Lagrangian on $TM$,
\[
L(t,q,v) := \max_{p\in T_q^*M} \bigl( \langle p,v \rangle - H(t,q,p)
\bigr), \quad \forall (t,q,v)\in [0,1]\times TM,
\]
which is also $C^2$-strictly convex and superlinear,
\begin{eqnarray*}
D_{vv} L(t,q,v) > 0 \quad \forall (t,q,v)\in [0,1]\times TM, \\
\lim_{|v|\rightarrow \infty} \frac{L(t,q,v)}{|v|} = +\infty \quad 
\mbox{uniformly in } (t,q)\in [0,1]\times M.
\end{eqnarray*}
Since the Fenchel transform is an involution, we also have
\begin{equation}
\label{fenchel}
H(t,q,p) = \max_{v\in T_q M} \bigl( \langle p,v \rangle - L(t,q,v)
\bigr), \quad \forall (t,q,p)\in [0,1]\times T^*M.
\end{equation}
Furthermore, the Legendre duality defines a diffeomorphism
\[
\mathcal{L} : [0,1] \times TM \rightarrow [0,1] \times T^*M, \quad
(t,q,v)  \rightarrow \bigl(t,q,D_v L(t,q,v)\bigl),
\]
such that
\begin{equation}
\label{equality}
L(t,q,v) = \langle p, v \rangle - H(t,q,p) \quad \iff \quad (t,q,p) = 
\mathcal{L}(t,q,v).
\end{equation}
A smooth curve $x:[0,1] \rightarrow T^*M$ is an orbit of the
Hamiltonian  vector field $X_H$ if and only if the curve $\gamma:=
\pi\circ  x:[0,1]\rightarrow M$ is an absolutely continuous extremal
of the  Lagrangian action functional
\[
\mathbb{S}_L(\gamma) := \int_0^1
L(t,\gamma(t),\gamma'(t))\, dt.
\]
The corresponding Euler-Lagrange equation can be written in local
coordinates  as
\begin{equation}
\label{euler}
\frac{d}{dt} \partial_v L(t,\gamma(t),\gamma'(t)) = \partial_q
L(t,\gamma(t), \gamma'(t)).
\end{equation}
If $Q$ is a non-empty submanifold of $M\times M$, the non-local
boundary  condition (\ref{nonlocbdry}) is translated into the conditions
\begin{eqnarray}
\label{lagnonloc1} (\gamma(0),\gamma(1)) & \in & Q, \\ \label{lagnonloc2}
D_v L(0,\gamma(0),\gamma'(0)) [\xi_0] & =  & D_v
L(1,\gamma(1),\gamma'(1))  [\xi_1] \quad \forall (\xi_0,\xi_1) \in
T_{(\gamma( 0),\gamma(1))} Q.
\end{eqnarray}
The second condition is the natural boundary condition induced by the
first  one, meaning that every curve which is an extremal curve of
$\mathbb{S}_L$ among all curves satisfying (\ref{lagnonloc1})
necessarily  satisfies (\ref{lagnonloc2}).

In order to study the second variation of $\mathbb{S}_L$ at the
extremal  curve $\gamma$, it is convenient to localize the problem in
$\R^n$.  This can be done by choosing a smooth local coordinate system
\[
[0,1] \times \R^n \rightarrow [0,1] \times M, \quad (t,q) \mapsto
(t,\varphi_t (q)),
\]
such that $\gamma(t)\in \varphi_t(\R^n)$ for every $t\in [0,1]$. Such
a  diffeomorphism induces the coordinate systems on the tangent and 
cotangent bundles given by
\begin{eqnarray}
\label{tbcs}
[0,1] \times T \R^n & \rightarrow & [0,1] \times TM, \quad (t,q,v)
\mapsto (t, \varphi_t(q),D\varphi_t(q)[v]\bigl), \\ \label{cbcs}
[0,1] \times T^* \R^n & \rightarrow & [0,1] \times T^*M \quad (t,q,p)
\mapsto  (t,\varphi_t(q),(D\varphi_t(q)^*)^{-1} [p] \bigr).
\end{eqnarray}
If we pull back the Lagrangian $L$ and the Hamiltonian $H$ by the
above  diffeomorphisms, we obtain a smooth Lagrangian on $[0,1]\times
T\R^n$  -- that we still denote by $L$ -- and a smooth Hamiltonian on
$[0,1] \times T^* \R^n$ -- that we still denote by $H$. These new
functions  are still related by Fenchel duality. The submanifold
$Q\subset  M \times M$ can also be pulled back in $\R^n \times \R^n$
by the  map $\varphi_0\times \varphi_1$. The resulting submanifold of
$\R^n  \times \R^n$ is still denoted by $Q$. The cotangent bundle
coordinate  system (\ref{cbcs}) induces a symplectic trivialization of
$x^*( TT^*M)$ which preserves the vertical subspaces and maps conormal
subbundles into conormal subbundles. In particular, this
trivialization  satisfies the assumptions of Proposition \ref{locfor}.

The solution $\gamma$ of
(\ref{euler}-\ref{lagnonloc1}-\ref{lagnonloc2}) is  now a curve
$\gamma:  [0,1] \rightarrow \R^n$. Let $i^Q(\gamma)$ be its Morse
index,  that is, the dimension of a maximal subspace of the Hilbert space
\[
W^{1,2}_W([0,1],\R^n) := \set{u\in W^{1,2}([0,1],\R^n)}{(u(0),u(1))
  \in W},  \mbox{where } W:= T_{(\gamma(0),\gamma(1))} Q,
\]
on which the second variation
\begin{eqnarray*}
d^2 \mathbb{S}_L(\gamma)[u,v] := \int_0^1 \Bigl( D_{vv}
L(t,\gamma,\gamma')  [u',u'] + D_{qv} L(t,\gamma,\gamma')[u',v] \\
+ D_{vq} L(t,\gamma,\gamma')[u,v'] + D_{qq} L(t,\gamma,\gamma')[u,v]
\Bigr)\, dt
\end{eqnarray*}
is negative definite. The nullity of such a quadratic form is denoted
by  $\nu^Q(\gamma)$,
\[
\nu^Q(\gamma) := \dim \set{u\in W^{1,2}_W([0,1],\R^n)}{d^2
  \mathbb{S}_L (\gamma)[u,v] = 0 \mbox{ for every } v\in
  W^{1,2}_W([0,1], \R^n)}.
\]
The following index theorem relates the Morse index and nullity of
$\gamma$  to the relative Maslov index and nullity of the
corresponding  Hamiltonian orbit:

\begin{thm}
\label{indfor}
Let $\gamma:[0,1] \rightarrow \R^n$ be a solution of  
(\ref{euler}-\ref{lagnonloc1}-\ref{lagnonloc2}), and let
$x:[0,1]\rightarrow  T^* \R^n$ be the corresponding Hamiltonian
orbit. Let  $\lambda$ be the path of Lagrangian subspaces of $T^* \R^n
\times T^* \R^n = T^* \R^{2n}$ defined by
\[
\lambda(t) :=  \graf D \phi_t^H(x(0)) C, \quad t\in [0,1],
\]
where $\phi_t^H$ denotes the Hamiltonian flow and $C$ is the
anti-symplectic  involution $C(q,p)=(q,-p)$. Let $W = T_{(\gamma(0), 
\gamma(1))} Q \in \mathrm{Gr} (\R^n \times \R^n)$. Then
\begin{eqnarray*}
& \nu^Q(\gamma) = \dim \lambda(1) \cap N^* W, & \\
& i^Q(\gamma) = \mu (\lambda, N^* W) + \frac{1}{2} (\dim Q - n) -  
\frac{1}{2} \nu^Q(\gamma). &
\end{eqnarray*}
\end{thm}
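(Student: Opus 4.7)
\emph{Step 1: Identification of the nullity.} A direct integration by parts in $d^2 \mathbb{S}_L(\gamma)[u,v]$ shows that $u$ lies in its kernel on $W^{1,2}_W([0,1],\R^n)$ if and only if $u$ is a smooth Jacobi field (the linearization of (\ref{euler}) along $\gamma$) satisfying both $(u(0), u(1)) \in W$ and the linearized natural condition $\xi(0)[\xi_0] = \xi(1)[\xi_1]$ for every $(\xi_0, \xi_1) \in W$, where $\xi(t) := D_{vv} L(t, \gamma, \gamma') u'(t) + D_{vq} L(t, \gamma, \gamma') u(t)$ is the conjugate-momentum variation. By construction $(u(t), \xi(t))$ is the linearization of the Hamiltonian flow along $x$, so $(u(1), \xi(1)) = D\phi_1^H(x(0))(u(0), \xi(0))$. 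Setting $v := (u(0), -\xi(0))$, so that $Cv = (u(0), \xi(0))$, the two boundary conditions together say exactly that $\bigl(v,\, D\phi_1^H(x(0)) C v \bigr) \in N^*W$, i.e., the point lies in $\lambda(1) \cap N^*W$. The map $u \mapsto v$ is a linear isomorphism onto this intersection, yielding the nullity identity.

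\emph{Step 2: Crossing form equals restricted Hessian.} To attack the index formula I reduce the Maslov index to a sum of crossing signatures via (\ref{relmasl}); this requires only regular crossings, which can be arranged by a small perturbation argument and a density/continuity consideration that I would carry out after the main identity is established generically. The key assertion is that at an interior crossing $t_0 \in (0, 1)$, the crossing form $\Gamma(\lambda, N^*W, t_0)$, evaluated on a vector $v \in \lambda(t_0) \cap N^*W$ corresponding (via Step 1) to a Jacobi field $u$ on $[0, t_0]$ with boundary data in $W$, coincides with the value $d^2\mathbb{S}_L|_{[0, t_0]}(\gamma)[u,u]$. I would derive this by differentiating $s \mapsto \omega_0(v, \eta(s))$ along the Lagrangian path using Hamilton's equations, then re-expressing $D_{pp} H$ in terms of $D_{vv}L$ through the Legendre duality (\ref{equality}) and performing one integration by parts; the boundary contribution from the partial integration is precisely the bilinear pairing on $W$ that encodes the natural boundary condition.

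\emph{Step 3: Counting crossings.} I would then consider the one-parameter family of restricted problems on $[0, s]$ for $s \in (0, 1]$, with the boundary condition imposed by $W$ at both endpoints via a suitable affine interpolation. For $s$ sufficiently small, strict convexity (\ref{convex}) of $L$ in $v$ makes the interior contribution to the Hessian positive; the remaining finite-dimensional defect coming from $W$ produces exactly the shift $\tfrac{1}{2}(\dim Q - n) + \tfrac{1}{2} \sgn\Gamma(\lambda, N^*W, 0)$, matching the left-endpoint term in (\ref{relmasl}). By Step 2, as $s$ increases both the Morse index of the restricted problem and the relative Maslov index jump by $\sgn\Gamma(\lambda, N^*W, t)$ at each interior crossing, so the two quantities stay equal throughout $(0, 1)$. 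At $s = 1$ the Maslov convention in (\ref{relmasl}) picks up the half-signature $\tfrac{1}{2}\sgn\Gamma(\lambda, N^*W, 1)$, which exceeds the actual Morse contribution at the boundary by exactly $\tfrac{1}{2}\nu^Q(\gamma)$, yielding the subtracted term on the right-hand side.

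\emph{Main obstacle.} The decisive computation is Step 2: differentiating the Lagrangian path $\lambda(t) = \graf D\phi_t^H(x(0)) C$ in $T^*\R^{2n}$ and matching the result with the Hessian of $\mathbb{S}_L$, while correctly tracking the anti-symplectic twist by $C$ on the first factor and the generality of $W \subset \R^n \oplus \R^n$ (no product splitting is assumed). The bookkeeping of signs -- in particular ensuring that the $W$-boundary terms produced by integration by parts align with those appearing in the symplectic pairing $\omega_0$ on $T^*\R^{2n}$ -- is delicate; once this is in place, the remaining steps are standard adaptations of Duistermaat's index theorem \cite{dui76} to the present non-local, conormal setting.
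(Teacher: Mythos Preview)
Your Step 1 is fine and matches the paper's brief treatment. The serious problem is Step 2: the identity you claim there is false. If $v\in\lambda(t_0)\cap N^*W$, then the corresponding Jacobi field $u$ on $[0,t_0]$ satisfies \emph{both} the linearized Euler--Lagrange equation and the $W$-natural boundary conditions, so it lies in the kernel of $d^2\mathbb{S}_L|_{[0,t_0]}$ on $W^{1,2}_W([0,t_0],\R^n)$; in particular $d^2\mathbb{S}_L|_{[0,t_0]}(\gamma)[u,u]=0$. (You can also see this by the integration by parts you propose: the boundary term is $u(t_0)\xi(t_0)-u(0)\xi(0)$, which vanishes precisely because $(-\xi(0),\xi(t_0))\in W^{\perp}$ and $(u(0),u(t_0))\in W$.) So your ``crossing form $=$ restricted Hessian'' identity would force every crossing to be degenerate, contradicting the regular-crossing hypothesis. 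The actual crossing form of $t\mapsto\graf D\phi_t^H(x(0))C$ at $t_0$ is the pointwise quantity $D^2H(t_0,x(t_0))\bigl[(u(t_0),\xi(t_0)),(u(t_0),\xi(t_0))\bigr]$, which in Lagrangian variables is $D_{vv}L\,|u'(t_0)|^2 - D_{qq}L\,|u(t_0)|^2$; for a general non-local $W$ the vector $u(t_0)$ need not vanish, so this is not sign-definite. Hence Step 3 also breaks down: the Morse index of the restricted problem on $[0,s]$ need not be monotone in $s$, and the interior crossings cannot simply be counted with multiplicity.

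This is exactly why the paper (following Duistermaat) does \emph{not} deform in the time variable. Instead it introduces a spectral parameter $\mu$ via the family $L+\mu c|q|^2$, linearizes to get a two-parameter family of symplectic matrices $\Phi(\mu,t)$, and observes that the crossing form of the path $\mu\mapsto\graf\Phi(\mu,1)C$ with $N^*W$ is \emph{positive definite} (Duistermaat's Proposition~4.1): crossings in the $\mu$-direction are exactly eigenvalues of the Hessian operator, and their crossing form is an $L^2$-type pairing. This gives $i^Q(\gamma)$ directly as a Maslov index in the $\mu$-direction. A homotopy over the rectangle $[-1,0]\times[0,1]$ then transfers this to $\mu(\lambda,N^*W)$, with the boundary pieces producing the correction terms $\tfrac12(\dim Q-n)$ and $-\tfrac12\nu^Q(\gamma)$. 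If you want to salvage your approach, the object you need is not the Hessian value on the Jacobi field but the derivative in $s$ of an eigenvalue branch of the restricted Hessian at a zero-crossing; relating \emph{that} to the Maslov crossing form is a spectral-flow statement, and carrying it out for non-local $W$ essentially reproduces the $\mu$-deformation anyway.
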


This theorem is essentially due to Duistermaat, see Theorem 4.3 in
\cite{dui76}. However, in Duistermaat's formulation the Morse index of
$\gamma$ is related to an absolute Maslov-type index $i(\lambda)$ of
the Lagrangian path $\lambda$ (see Definition 2.3 in
\cite{dui76}). This  choice makes the index formula more
complicated. The  use of the relative Maslov index
$\mu(\lambda,\cdot)$  introduced by Robbin and Salamon in \cite{rs93}
simplifies such a formula. Rather than deducing Theorem \ref{indfor}
from  Duistermaat's statement, we prefer to present a modified version of his proof, using
the  relative Maslov index $\mu$ instead of the absolute Maslov-type
index  $i$.

\medskip

\begin{proof}
Let  $c$ be a real number, chosen to be so large that the bilinear
form $d^2 \mathbb{S}_{L+c|q|^2}(\gamma)$ is positive definite, which is therefore a
Hilbert product on $W^{1,2}_W([0,1],\R^n)$. We denote by $\mathscr{E}$
the bounded self-adjoint operator on $W^{1,2}_W([0,1],\R^n)$ which
represents the symmetric bilinear form $d^2 \mathbb{S}_L(\gamma)$ with
respect to such a Hilbert product. It is a compact perturbation of the
identity, and $i^Q(\gamma)$ is the number of its negative eigenvalues,
counted with multiplicity (see Lemma 1.1 in \cite{dui76}), while
$\nu^Q(\gamma)$ is the dimension of its kernel. The eigenvalue
equation $\mathscr{E} u = \lambda u$ corresponds to a second order
Sturm-Liouville boundary value problem in $\R^n$. Legendre duality
shows that such a linear second order problem is equivalent to the
following  first order linear Hamiltonian boundary value problem on  
$T^* \R^n$:
\begin{equation}
\label{linham}
\xi'(t) = A(\mu,t) \xi(t), \quad (\xi(0),C\xi(1)) \in N^* W.
\end{equation}
Here
\[
A(\mu,t) := \left( \begin{array}{cc} D_{qp} H(t,x(t)) & D_{pp} H(t,x(t)) \\ -
\mu c  T - D_{qq} H(t,x(t)) & - D_{pq} H(t,x(t)) \end{array} \right),
\]
where $\mu=\lambda/(1-\lambda)$ and $T:\R^n \rightarrow (\R^n)^*$ is the isomorphism induced by the Euclidean inner product. The fact that $d^2 \mathbb{S}_{L+c|q|^2}(\gamma)$ is positive definite implies that problem (\ref{linham}) has only the zero solution when $\mu \leq -1$. Let $\Phi(\mu,t)$ be the solution of
\[
\frac{\partial \Phi}{\partial t} (\mu,t) = A(\mu,t) \Phi(\mu,t) , \quad \Phi(\mu,0) = I.
\]
When $\mu=0$, $\Phi(0,\cdot)$ is the differential of the Hamiltonian flow, so
\[
\lambda (t) = \graf \Phi(0,t) C.
\]
In particular, also using the fact that $N^* W$ is invariant with respect to the involution $C\times C$, we find that
\[
\nu^Q(\gamma) = \dim \ker \mathscr{E} = \dim \bigl( \graf C \Phi(0,1) \bigr) \cap N^* W = \dim \lambda(1)\cap N^*W,
\]
as claimed. The eigenvalue $\lambda$ is negative if and only if $\mu$ belongs to the interval $]-1,0[$, so
\begin{equation}
\label{prifo}
i^Q(\gamma) = \sum_{-1<\mu<0} \dim \bigl( \graf \Phi(\mu,1)C \bigr) \cap N^* W
\end{equation}
(see equation (1.23) in \cite{dui76}). By Proposition 4.1 in \cite{dui76}, the Lagrangian path
\[
[-1,0] \mapsto \mathscr{L}(T^* \R^{2n}), \quad
\mu \mapsto \graf \Phi(\mu,1)C,
\]
has non-trivial intersection with the Lagrangian subspace $N^* W$ for finitely many $\mu\in ]-1,0]$, and the corresponding crossing forms (see (\ref{crform})) are positive definite. Then (\ref{prifo}) and formula (\ref{relmasl}) for the relative Maslov index in the case of regular crossings imply that if $\epsilon>0$ is so small that there are no non-trivial intersections for $\mu \in [-\epsilon,0[$, there holds
\begin{eqnarray}
\label{secfo}
& i^Q(\gamma) = \mu ( \graf \Phi(\cdot,1)C|_{[-1,-\epsilon]}, N^* W), & \\
\label{terfo}
& \mu(\graf \Phi(\cdot,1)C|_{[-\epsilon,0]},N^* W) = \frac{1}{2} \dim \bigl( \graf \Phi(0,1)C \bigr) \cap N^* W = \frac{1}{2} \dim \nu^Q(\gamma). &
\end{eqnarray}
By considering the homotopy
\[
[-1,0]\times [0,1] \rightarrow \mathscr{L}(T^* \R^{2n}) , \quad (\mu,t) \mapsto \graf \Phi(\mu,t) C,
\]
and by using the homotopy and concatenation properties of the relative Maslov index, we obtain from (\ref{secfo}) the identity
\begin{equation}
\label{pezzi}
\begin{split}
i^Q(\gamma) = -\mu (\graf \Phi(-1,\cdot)C|_{[0,1]}, N^* W) + \mu (\graf \Phi(\cdot,0)C|_{[0,1]},N^* W) \\ + \mu (\graf \Phi(0,\cdot)C|_{[0,1]},N^* W) - \mu(\graf \Phi(\cdot,1)C|_{[-\epsilon,0]},N^*W).
\end{split}
\end{equation}
The path $t \mapsto \graf \Phi(-1,t)C$ appearing in the first term can intersect $N^* W$ only for $t=0$, where it coincides with $\graf C = N^* \Delta$, where $\Delta=\Delta_{\R^n}$ is the diagonal in $\R^n \times \R^n$.
By Lemma 4.2 in \cite{dui76}, the corresponding crossing form is non-degenerate and has Morse index equal to
\[
\dim \tau^* ( N^* W \cap N^* \Delta),
\]
where $\tau^*: T^* \R^{2n} \rightarrow \R^{2n}$ is the standard projection. Since $\tau^* ( N^* W \cap N^* \Delta) = W \cap \Delta$, we deduce that this crossing form has signature
\begin{eqnarray*}
\dim N^* W \cap N^* \Delta - 2 \dim W \cap \Delta = \dim W^{\perp} \cap \Delta^{\perp} -  \dim W \cap \Delta
= \dim (W + \Delta)^{\perp} -  \dim W \cap \Delta \\
= 2n - \dim (W + \Delta) -  \dim W \cap \Delta = 2n - \dim W - \dim \Delta = n - \dim W.
\end{eqnarray*}
So by (\ref{relmasl}),
\begin{equation}
\label{duno}
\mu (\graf \Phi(-1,\cdot)C|_{[0,1]}, N^* W) = \frac{1}{2} (n-\dim W).
\end{equation}
Since $\graf \Phi(\mu,0)C = \graf C = N^* \Delta$ does not depend on $\mu$, the second term in (\ref{pezzi}) vanishes,
\begin{equation}
\label{ddue}
\mu (\graf \Phi(\cdot,0)C|_{[0,1]},N^* W) = 0.
\end{equation}
The third term in (\ref{pezzi}) is precisely
\begin{equation}
\label{dtre}
\mu (\graf \Phi(0,\cdot)C|_{[0,1]},N^* W) = \mu(\lambda,N^*W),
\end{equation}
and the last one is computed in (\ref{terfo}). Formulas (\ref{terfo}), (\ref{pezzi}), (\ref{duno}), (\ref{ddue}), and (\ref{dtre}) imply
\[
i^Q(\gamma) = \mu(\lambda,N^* W) + \frac{1}{2} (\dim W-n) - \frac{1}{2} \nu^Q(\gamma),
\]
concluding the proof.
\end{proof} \qed

We conclude this section by reformulating the above result in terms of the Maslov index for solutions of the non-local conormal boundary value Hamiltonian problems introduced in Section \ref{hscbcbs}.

\begin{cor}
\label{indcor}
Assume that the Hamiltonian $H\in C^{\infty}([0,1]\times T^*M)$ satisfies (\ref{convex}-\ref{superlin}), and let $L\in C^{\infty}([0,1]\times TM)$ be its Fenchel dual Lagrangian.
Let $x:[0,1]\rightarrow T^*M$ be a solution of the non-local conormal boundary value Hamiltonian problem (\ref{ham}-\ref{nonlocbdry}), and let $\gamma= \tau^* \circ x : [0,1] \rightarrow M$ be the corresponding solution of (\ref{euler}-\ref{lagnonloc1}-\ref{lagnonloc2}). Then
\[
\nu^Q(x) = \nu^Q(\gamma), \quad \mu^Q(x) = i^Q(\gamma) + \frac{1}{2} \nu^Q(x).
\]
\end{cor}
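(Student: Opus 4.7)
The plan is to apply Proposition \ref{locfor} with the specific vertical preserving symplectic trivialization of $x^*(TT^*M)$ induced by a local coordinate chart around $\gamma$, and then invoke Theorem \ref{indfor}. First I choose local coordinates $\varphi_t:\R^n \to M$ with $\gamma(t)\in \varphi_t(\R^n)$ for every $t\in[0,1]$, exactly as in the setup preceding Theorem \ref{indfor}. The induced cotangent bundle coordinate system (\ref{cbcs}) defines a vertical preserving symplectic trivialization $\Phi$ of $x^*(TT^*M)$ which, as already noted in the excerpt, maps conormal subbundles of $T^*M$ into conormal subspaces of $T^*\R^n$.

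Next I verify that $\Phi$ is admissible for Proposition \ref{locfor}. The involution $\mathscr{C}$ preserves conormal subbundles, and in the chart $D\mathscr{C}(-x(1))$ is intertwined with the linear involution $C$; hence the composite $\Phi(0) \times C\Phi(1) D\mathscr{C}(-x(1))$ sends $T_{(x(0),-x(1))} N^*Q$ to the conormal subspace $N^* W$ of $T^* \R^{2n}$, where $W = T_{(\gamma(0),\gamma(1))} Q \in \mathrm{Gr}(\R^n \times \R^n)$ is the same linear subspace appearing in Theorem \ref{indfor}.

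Under this trivialization $G^\Phi(t)$ coincides with the differential at $x(0)$ of the pulled-back Hamiltonian flow on $T^*\R^n$, so $\graf G^\Phi(t) C = \lambda(t)$ for every $t \in [0,1]$, in the notation of Theorem \ref{indfor}. The intersection defining $\nu^Q(x)$ is then transported by the trivialization to $\lambda(1) \cap N^* W$, whose dimension is identified with $\nu^Q(\gamma)$ by the first claim of Theorem \ref{indfor}; this yields the equality $\nu^Q(x) = \nu^Q(\gamma)$. Formula (\ref{nonlocfor}) of Proposition \ref{locfor} gives
\[
\mu^Q(x) = \mu(\graf G^\Phi C, N^* W) + \frac{1}{2}(\dim Q - n) = \mu(\lambda, N^* W) + \frac{1}{2}(\dim Q - n),
\]
and combining with the Morse index formula of Theorem \ref{indfor} produces
\[
\mu^Q(x) = i^Q(\gamma) + \frac{1}{2} \nu^Q(\gamma) = i^Q(\gamma) + \frac{1}{2} \nu^Q(x),
\]
as required.

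The argument is essentially bookkeeping once the right trivialization is selected; the only place where care is needed is the identification of the image of $T_{(x(0),-x(1))} N^* Q$ under $\Phi(0) \times C\Phi(1) D\mathscr{C}(-x(1))$ with the conormal space $N^* W$. This follows by direct inspection from the fact that cotangent bundle charts take conormal bundles to conormal bundles and that $C$ and $\mathscr{C}$ preserve conormality, but it is worth writing out once so that the constant $W^\Phi$ of Proposition \ref{locfor} genuinely coincides with the subspace $W$ appearing in Theorem \ref{indfor}.
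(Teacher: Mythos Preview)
Your proof is correct and is exactly the argument the paper has in mind: the text preceding Theorem~\ref{indfor} already notes that the cotangent bundle chart (\ref{cbcs}) furnishes a vertical preserving trivialization satisfying the hypotheses of Proposition~\ref{locfor}, so the corollary is obtained precisely by combining that proposition with Theorem~\ref{indfor} as you do. The paper does not spell out the proof of Corollary~\ref{indcor}, but your write-up matches the intended reformulation step for step.
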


\begin{rem}
The Tonelli assumptions (\ref{convex}-\ref{superlin}) are needed in order to have a globally defined Lagrangian $L$. Since the Maslov and Morse indexes are local invariants, the above result holds if we just assume the Legendre positivity condition, that is
\[
D_{pp} H(t,q(t),p(t)) > 0 \quad \forall t\in [0,1],
\]
along the Hamiltonian orbit $x(t)=(q(t),p(t))$.
\end{rem}

\section{The Floer complex}

Let us fix a metric $g$ on $M$, with
associated norm $|\cdot|$. We denote by the same symbol the
induced metric on $TM$ and on $T^*M$.
This metric determines an isometry $TM \to T^*M$ and a direct
summand of the vertical tangent bundle $T^v T^*M$, that is, the horizontal
bundle $T^hT^*M$. It also induces a preferred
$\omega$-compatible almost complex structure $J_0$ on $T^*M$, which
in the splitting $TT^*M = T^hT^*M \oplus T^v T^*M$ takes the form
\[
 J_0= \begin{pmatrix} 0 & -I\\
I & 0
\end{pmatrix},
\]
where the horizontal and vertical subbundles are identified by the metric.
In order to have a well-defined Floer complex, we assume that $M$ is
compact, that the submanifold $Q$ of $M\times M$ is also compact, and
that the smooth Hamiltonian $H:[0,1] \times T^*M \rightarrow \R$
satisfies the following conditions:

\begin{enumerate}

\item[(H0)] every solution $x$ of the non-local boundary value
  Hamiltonian problem (\ref{ham}-\ref{nonlocbdry}) is non-degenerate, 
meaning that $\nu^Q(x) = 0$;

\item[(H1)] there exist $h_0>0$ and $h_1 \geq 0$ such that
\[
DH(t,q,p)[\eta] - H(t,q,p) \geq h_0|p|^2- h_1,
\]
for every $(t,q,p)\in [0,1] \times T^*M$ ($\eta$
denotes the Liouville vector field);

\item[(H2)] there exists an $h_2 \geq 0$ such that
\[
|\nabla_q H(t,q,p)|\leq h_2(1+|p|^2), \quad |\nabla_p H(t,q,p)| \leq
h_2(1+ |p|),
\]
for every $(t,q,p) \in [0,1]\times T^*M$ ($\nabla_q$ and $\nabla_p$
denote the horizontal and the vertical components of the gradient).
\end{enumerate}

Condition (H0) holds for a generic choice of $H$, in basically every
reasonable space. Since $M$ is compact, it is easy to show that conditions $(H1)$ and $(H2)$ do not depend on the
choice of the metric on $M$ (it is important here that the exponent of
$|p|$ in the second inequality of (H2) is one unit less than the
corresponding exponent in the first inequality).

We denote by $\mathscr{S}^Q (H)$ the set of solutions of
(\ref{ham}-\ref{nonlocbdry}), which by (H0) is at most countable.
The first variation of the Hamiltonian action functional
\[
\mathbb A_H(x) := \int x^*( \theta - Hdt) = \int_0^1
\bigl(\theta (x'(t)) - H(t,x(t))\bigr) \,dt
\]
on the space of free paths on $T^*M$ is
\begin{equation}\label{eq:fistvariariationham}
d \mathbb A_H(x)[\zeta] = \int_0^1 \omega\bigl(\zeta, x'(t) - X_H(t,
x) \bigr) \,dt + \theta(x(1))[\zeta(1)] -
\theta(x(0))[\zeta(0)],
\end{equation}
where $\zeta $ is a section of $x^*(TT^*M)$. Since the Liouville
one-form $\theta \times \theta$ of $T^*M^2$ vanishes on the conormal
bundle of every submanifold of $M^2$, 
the extremal curves of $\mathbb A_H$ on the space of paths satisfying
(\ref{nonlocbdry}) are precisely the elements of $\mathscr{S}^Q(H)$. 
A first consequence of conditions (H0), (H1), (H2) is that the set of
solutions $x\in \mathscr{S}^Q(H)$ with an upper bound on the action,
$\mathbb{A}_H(x) \leq A$, is finite (see Lemma 1.10 in \cite{as06}).

Let $J$ be a smoothly time-dependent $\omega$-compatible almost complex structure on $T^*M$, meaning that $\omega(J(t,\cdot)\cdot,\cdot)$ is a Riemannian metric on $T^*M$ (notice that the metric almost complex structure $J_0$ is $\omega$-compatible).
Let us consider the Floer equation 
\begin{equation}\label{eq:Floer}
\partial_s u + J(t,u)\bigl( \partial_t u - X_H(t,u) \bigr)=0
\end{equation}
where $u:\R \times [0,1] \to T^*M$, and $(s,t)$ are the coordinates
on the strip $\R \times [0,1]$. It
is a nonlinear first order elliptic PDE, a perturbation of order zero
of the equation for $J$-holomorphic strips on the almost-complex
manifold $(T^*M,J)$. The solutions of
\eqref{eq:Floer} which do not depend on $s$ are the orbits of the
Hamiltonian vector field $X_H$. If $u$ is a solution of
\eqref{eq:Floer}, an integration by parts and formula
\eqref{eq:fistvariariationham} imply the identity
\[
\int_a^b \int_0^1 |\partial_s u|^2\, ds \, dt = \mathbb A_H(u(a,\cdot))- \mathbb
A_H(u(b, \cdot)) + \int_a^b \bigl(\theta(u(s,1))[\partial_s u(s,1)]-
\theta(u(s,0))[\partial_s u(s,0)]\bigr)\, ds.
\]
In particular, if $u$ satisfies also the non-local boundary condition
\begin{equation}\label{eq:nonlocalbdforu}
\left(u(s, 0), -u(s,1)\right) \in N^*Q, \qquad \forall\, s \in \R,
\end{equation}
the fact that the Liouville form vanishes on conormal bundles implies
that
\begin{equation}\label{eq:energyidentityreduced}
\int_a^b \int_0^1 |\partial_s u|^2\, ds \, dt= \mathbb A_H(u(a,\cdot))- \mathbb
A_H(u(b, \cdot)) .
\end{equation}

Given $x^-, x^+ \in \mathscr S^Q(H)$, we denote by $\mathscr M(x^-,
x^+)$ the set of all solutions of
(\ref{eq:Floer}-\ref{eq:nonlocalbdforu}) such that
\[
\lim_{s \to \pm \infty} u(s,t)= x^\pm(t), \quad \forall t\in [0,1].
\]
By elliptic regularity, such solutions are smooth up to the
boundary. Moreover, the condition (H0) implies that the above convergence
of $u(s,t)$ 
to $x^{\pm}(t)$ is exponentially fast in $s$, uniformly with respect
to $t$. Furthermore, (\ref{eq:energyidentityreduced}) implies that the elements $u$ of
$\mathscr{M}(x^-,x^+)$ satisfy the energy identity
\begin{equation}\label{enid}
E(u) := \int_{-\infty}^{+\infty} \int_0^1 |\partial_s u|^2 \, ds \, dt =
\mathbb{A}_H(x^-) - \mathbb{A}_H(x^+).
\end{equation}
In particular, $\mathscr{M}(x^-,x^+)$ is empty whenever
$\mathbb{A}_H(x^-) \leq \mathbb{A}_H(x^+)$ and $x^-\neq x^+$, and it
consists of the only element $u(s,t)=x(t)$ when $x^-=x^+=x$.

A standard transversality argument (see \cite{fhs96}) shows that we can perturb the time-dependent $\omega$-compatible almost complex structure $J$ in order to ensure that the linear operator obtained by
linearizing (\ref{eq:Floer}-\ref{eq:nonlocalbdforu}) along every
solution in $\mathscr M(x^-,x^+)$ is onto, for every
pair $x^-,x^+\in \mathscr{S}^Q(H)$. 
It follows that  $\mathscr M(x^-,
x^+)$ has the structure of a smooth manifold. Theorem 7.42 in \cite{rs95} implies that the dimension of $\mathscr M(x^-,x^+)$ equals
the difference of the Maslov indices of the Hamiltonian orbits $x^-,x^+$:
\[
\dim\, \mathscr M(x^-, x^+)= \mu^Q(x^-) - \mu^Q(x^+).
\]

The manifolds $\mathscr M(x^-, x^+)$ can be oriented in a way which is
{\em coherent with gluing}. This fact is true for more general Lagrangian
intersection problems on symplectic manifolds (see 
\cite{fh93} for periodic orbits and \cite{fooo07} for Lagrangian intersections), but the special situation of conormal boundary
conditions on cotangent bundles allows simpler proofs (see
section 1.4 in \cite{as06}, where the meaning of coherence is also
explained; see also Section 5.2 in \cite{oh97} and Section 5.9 in 
\cite{as08}). 

If the $\omega$-compatible almost complex structure $J$ is $C^0$-close enough to the metric almost complex structure $J_0$, conditions (H1) and (H2) imply that 
the solution spaces $\mathcal{M}(x^-,x^+)$ are pre-compact in
the $C^{\infty}_{\mathrm{loc}}$ topology. In fact, by the energy
identity (\ref{eq:energyidentityreduced}), Lemma 1.12 in \cite{as06}
implies that, setting $u=(q,p)$, $p$ has a uniform bound in
$W^{1,2}([s,s+1] \times [0,1])$. From this fact, Theorem 1.14 in \cite{as06} produces an $L^{\infty}$ bound for the elements of
$\mathcal{M}(x^-,x^+)$ (here is where we need $J$ to be close enough to $J_0$). A $C^1$ bound is then a consequence of the
fact that the bubbling off of $J$-holomorphic spheres and disks cannot occur, the first 
because the symplectic form $\omega$ of $T^*M$ is exact, and the second because the Liouville form -- a
primitive of $\omega$ -- vanishes on conormal bundles. Finally, $C^k$
bounds for all positive integers $k$ follow from elliptic bootstrap.

When $\mu^Q(x^-) - \mu^Q(x^+)=1$, $\mathscr M(x^-,x^+)$ is an oriented
one-dimensional manifold. Since the translation of the $s$ variables
defines a free $\R$-action on it, $\mathscr M(x^-, x^+)$ consists of
lines. Compactness and transversality imply that the number of these
lines is finite. Denoting by $[u]$ the equivalence class of $u$ in the
compact zero-dimensional manifold $\mathscr M(x^-, x^+)/\R$, we define
$\epsilon([u]) \in \{+1,-1\}$ to be $+1$ if the $\R$-action is
orientation preserving on the connected component of $\mathscr
M(x^-,x^+)$ containing $u$, and $-1$ in the opposite case. The integer
$n_F(x^-,x^+)$ is defined as
\[
n_F(x^-,x^+):= \sum_{[u] \in \mathscr M(x^-, x^+)/\R} \epsilon([u]),
\]
If $k$ is an integer, we denote by $F_k^Q(H)$ the free Abelian 
group generated by the
elements $x \in \mathscr S^Q(H)$ with Maslov index $\mu^Q(x)=k$.
These groups need not be finitely generated. The homomorphism
\[
\partial_k : F_k^Q(H) \to F_{k-1}^Q(H)
\]
is defined in terms of the generators by
\[
\partial_k x^- := \sum_{\substack{x^+ \in \mathscr S(H)\\ \mu^Q(x^+)=k-1}} n_F(x^-,x^+)\,x^+, \qquad \forall\, x^-
\in \mathscr S^Q(H), \; \mu^Q(x^-)=k.
\]
The above sum is finite because, as already observed, the set of
elements of $\mathscr{S}^Q(H)$ with an upper action bound is finite. 
A standard gluing argument shows that $\partial_{k-1} \circ \partial_k=0$, so
$\{F_*^Q(H), \partial_* \}$ is a complex of free Abelian groups,
called the {\em Floer complex\/} of $(T^*M,Q,H,J)$. The homology of such a
complex is called the {\em Floer homology\/} of $(T^*M,Q,H,J)$:
\[
HF_k^Q(H,J):= \dfrac{\ker(\partial_k :F_k^Q(H) \to
F^Q_{k-1}(H))}{\ran(\partial_{k+1} :F^Q_{k+1}(H) \to F^Q_k(H))}.
\]
The Floer complex has an $\R$-filtration defined by the action
functional: if $F^{Q,A}_k(H)$ denotes the subgroup of $F_k^Q(H)$
generated by the $x \in \mathscr S^Q(H)$ such that $\mathbb{A}_H(x)<A$,
the boundary operator $\partial_k$ maps $F^{Q,A}_k(H)$ into
$F_{k-1}^{Q,A}(H)$, so $\{F_*^{Q,A}(H), \partial_*\}$ is a subcomplex
(which is finitely generated). 

By changing the orientation data or the almost complex
structure $J$, we obtain an isomorphic chain complex. Therefore, the Floer 
complex of $(T^*M,Q,H)$ is well-defined up to isomorphism.

On the other
hand, a different choice of the Hamiltonian (still satisfying (H0),
(H1), (H2)) produces chain homotopy equivalent complexes. These facts can be
proven by standard homotopy argument in Floer theory, but the
Hamiltonians to be joined have to be chosen close enough, in order to
guarantee compactness (see Theorems 1.12 and 1.13 in \cite{as06}).

\begin{rem}
Conditions (H1) and (H2) do not require $H$ to be convex in $p$, not
even for $|p|$ large. They are used to prove compactness of both the set
of Hamiltonian orbits below a certain action and the set of solutions
of the Floer equation connecting them. They could be replaced by suitable convexity and super-linearity assumptions on $H$. This approach  is taken in the context of generalized Floer homology in \cite{ekp06}. Since this class has a
non-empty intersection with the class of Hamiltonians satisfying (H1)
and (H2), the homotopy type of the Floer complex is the same in both
classes.
\end{rem}

\section{The Morse complex}

In order to define the Morse complex of the Lagrangian action
functional, we shall work with a time-dependent {\em electro-magnetic} Lagrangian, that is  a smooth function $L:[0,1] \times TM \to \R$ of the form
\[
L(t,q,v) = \frac{1}{2} \langle A(t,q) v,v \rangle + \langle \alpha(t,q), v \rangle - V(t,q), \quad \forall t\in [0,1], \; q\in M, \; v\in T_q M,
\]
where $\langle \cdot , \cdot \rangle$ denotes the duality pairing, $A(t,q): T_q M \rightarrow T_q^* M$ is a {\em positive} symmetric linear mapping smoothly depending on $(t,q)$, $\alpha$ is a smoothly time dependent one-form, and $V$ is a smooth function. In particular, $L$ satisfies the classical Tonelli assumptions. As
recalled in Section \ref{morindsec}, these assumptions imply the
equivalence between the Euler-Lagrange equation (\ref{euler}) associated
to $L$ and the Hamiltonian equation (\ref{ham}) associated to its
Fenchel dual $H:[0,1]\times T^*M \rightarrow \R$. Actually, an explicit computation shows that
\[
H(t,q,p) = \frac{1}{2} \langle A(t,q)^{-1} (p - \alpha(t,q)) , p - \alpha(t,q) \rangle + V(t,q), \quad \forall t\in [0,1], \;q\in M, \; p\in T_q^* M,
\]
so $H$ satisfies (H1) and (H2).

By the Legendre transform, the elements $x$ of $\mathscr{S}^Q(H)$ are
in one-to-one correspondence with the solutions $\gamma$ of (\ref{euler})
satisfying the boundary conditions (\ref{lagnonloc1}) and
(\ref{lagnonloc2}). Let $\mathscr{S}^Q(L)$ denote the set of these
$M$-valued curves. The Lagrangian action functional
$\mathbb{S}_L$ is smooth\footnote{In \cite{as06} the first and the third author considered a larger class of Lagrangians, having quadratic growth at infinity. However, under those assumptions the Lagrangian action functional $\mathbb{S}_L$ is only $C^1$ and twice Gateaux-differentiable on the Hilbert manifold $W^{1,2}([0,1],M)$, a fact that was overlooked in \cite{as06}. A Morse theory under these weaker regularity conditions is possible (see for instance \cite{ben91}), but we prefer to avoid these technical difficulties and work with an electro-magnetic Lagrangian $L$, for which $\mathbb{S}_L$ is indeed smooth.} on the Hilbert
manifold $W^{1,2}_Q([0,1],M)$ consisting of the absolutely continuous curves
$\gamma:[0,1]\rightarrow M$ whose derivative is square integrable and
such that $(\gamma(0),\gamma(1)) \in Q$. 
The elements of $\mathscr{S}^Q(L)$ are
precisely the critical points of the restriction of $\mathbb{S}_L$ to such a manifold,
and condition (H0) is equivalent to:
\begin{enumerate}
\item[(L0)] all the critical points $\gamma\in \mathscr{S}^Q(L)$ of the restriction of $\mathbb{S}_L$ to
  $W^{1,2}_Q([0,1],M)$ are non-degenerate,
\end{enumerate}
meaning that the bounded self-adjoint operator on $T_{\gamma}
W^{1,2}_Q([0,1],M)$ representing the second differential of
$\mathbb{S}_L$ at $\gamma$ with respect to a $W^{1,2}$ inner product
is an isomorphism. Under this assumption, Corollary \ref{indcor} implies that
the Morse index $i^Q(\gamma)$ of $\gamma\in \mathscr{S}^Q(L)$, seen as a
critical point of the restriction of $\mathbb{S}_L$ to $W^{1,2}_Q([0,1],M)$, coincides
with the Maslov index $\mu^Q(x)$ of the corresponding element $x\in
\mathscr{S}^Q(H)$. 

The Lagrangian $L$ is bounded from below and so is the action functional
$\mathbb S_L$. The metric of the compact manifold $M$ induces a
complete Riemannian structure on the Hilbert manifold $W^{1,2}_Q([0,1],
M)$, namely
\[
\langle\langle \xi, \zeta \rangle\rangle := \int_0^1 \bigl( g
(\nabla_t \xi, \nabla_t \zeta) + g( \xi,
\zeta ) \bigr)\, dt, \qquad \forall\, \gamma \in
W^{1,2}_Q([0,1], M), \; \forall \xi, \zeta \in T_\gamma W^{1,2}_Q([0,1], M),
\]
where $\nabla_t$ denotes the Levi-Civita covariant derivative along
$\gamma$. The functional $\mathbb{S}_L$ satisfies
the {\em Palais-Smale condition} on the Riemannian manifold
$W^{1,2}_Q([0,1], M)$, that is every sequence $(\gamma_h)\subset 
W^{1,2}_Q([0,1], M)$ such that $\mathbb{S}_L(\gamma_h)$ is bounded and
$\| \nabla \mathbb{S}_L(\gamma_h)\|$ is infinitesimal has a 
subsequence which converges in the $W^{1,2}$ topology (see e.g.\
Appendix A in \cite{af07}).  

Therefore, the functional $\mathbb{S}_L$ is smooth, bounded from below, 
has non-degenerate critical points
with finite Morse index, and satisfies the Palais-Smale
condition on the complete Riemannian manifold $W^{1,2}_Q([0,1],
M)$. Under these assumptions, the Morse complex of $\mathbb{S}_L$ on
$W^{1,2}_Q([0,1],M)$ is well-defined (up to chain isomorphism) and its
homology is isomorphic to the singular homology of  $W^{1,2}_Q([0,1],
M)$. The details of the construction are contained in
\cite{ama06m}. Here we just state the results and fix the notation.

Let $M_k^Q(\mathbb{S}_L)$ be the free Abelian group generated by the
elements $\gamma$ of $\mathscr{S}^Q(L)$ of Morse index
$i^Q(\gamma)=k$. Up to a perturbation of the Riemannian metric on $W^{1,2}_Q([0,1],
M)$, the unstable and stable manifolds $W^u(\gamma^-)$ and
$W^s(\gamma^+)$ of  
$\gamma^-$ and $\gamma^+$ with respect to the negative gradient flow of
$\mathbb{S}_L$ on $W^{1,2}_Q([0,1],M)$ have transverse intersections
of dimension $i^Q(\gamma^-) - i^Q(\gamma^+)$, for every pair of
critical points $\gamma^-,\gamma^+$. An arbitrary choice of 
orientation for the (finite-dimensional) unstable manifold of each critical point induces
an orientation of all these intersections. When
$i^Q(\gamma^-) - i^Q(\gamma^+)=1$, such an intersection consists of
finitely many oriented lines. The integer $n_M(\gamma^-,\gamma^+)$ is
defined to be the number of those lines where the orientation agrees
with the direction of the negative gradient flow minus the number of
the other lines. Such integers are the coefficients of the
homomorphisms
\[
\partial_k : M_k^Q(\mathbb{S}_L) \rightarrow M_{k-1}^Q(\mathbb{S}_L),
\quad \partial_k \gamma^- = \sum_{\substack{\gamma^+ \in
    \mathscr{S}^Q(L) \\ i^Q(\gamma^+) = k-1}} n_M(\gamma^-,\gamma^+)\,
\gamma^+,
\]
defined in terms of the generators $\gamma^- \in \mathscr{S}^Q(L)$,
$i^Q(\gamma^-) = k$. This sequence of homomorphisms can be identified with the
boundary operator associated to a cellular filtration of  $W^{1,2}_Q([0,1],
M)$ induced by the negative gradient flow of
$\mathbb{S}_L$. Therefore, $\{M_*^Q(\mathbb{S}_L), \partial_*\}$ is a
chain complex of free Abelian groups, called the {\em Morse complex} of
$\mathbb{S}_L$ on $W^{1,2}_Q([0,1],M)$, and its homology is isomorphic
to the singular homology of $W^{1,2}_Q([0,1],M)$. Changing the
(complete) Riemannian metric on $W^{1,2}_Q([0,1],M)$ produces a chain
isomorphic Morse complex. The Morse complex is filtered by the action
level, and the homology of the subcomplex generated by all elements
$\gamma\in \mathscr{S}^Q(L)$ with $\mathbb{S}_L(\gamma) < A$ is
isomorphic to the singular homology of the sublevel
\[
\set{\gamma \in W^{1,2}_Q([0,1],M)}{\mathbb{S}_L(\gamma) < A}.
\]
The embedding
of $W^{1,2}_Q([0,1],M)$ into the space $P_Q([0,1],M)$ of {\em continuous}
curves $\gamma:[0,1]\rightarrow M$ such that $(\gamma(0),\gamma(1))\in
Q$ is a homotopy equivalence. Therefore, the homology of the above
Morse complex is isomorphic to the singular homology of the path space
$P_Q([0,1],M)$.

\section{The isomorphism between the Morse and the Floer complex}

We are now ready to state and prove the main result of this paper. Here $M$ is a compact manifold and $Q$ is a compact submanifold of $M\times M$.

\begin{thm}\label{thm:isomorphism}
Let $L \in C^\infty([0,1] \times TM)$ be a time-dependent electro-magnetic 
Lagrangian satisfying condition (L0). Let $H\in C^{\infty} ([0,1] \times T^* M)$ be its Fenchel-dual Hamiltonian. 
Then there is a chain complex isomorphism
\[
\Theta : \{ M_*^Q(\mathbb S_L), \partial_*\} \longrightarrow \{F_*^Q(H), \partial_*\}
\]
uniquely determined up to chain homotopy, having the form
\[
\Theta \gamma = \sum_{\substack{x \in \mathscr S^Q(H)\\
\mu^Q(x)=i^Q(\gamma)}} n_{\Theta}(\gamma,x)\,x, \qquad \forall \gamma \in \mathscr S^Q (L),
\]
where $n_{\Theta}(\gamma,x)=0$ if $\mathbb S_L(\gamma) \leq \mathbb A_H (x)$ unless
$\gamma$ and $x$ correspond to the same solution, in which case
$n_{\Theta}(\gamma,x)=\pm 1$. In particular, $\Theta$ respects the action filtrations of the Morse and the Floer complexes.
\end{thm}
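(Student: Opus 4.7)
The plan is to construct the isomorphism $\Theta$ by counting solutions of a mixed Morse-Floer problem on the half-strip, adapting the scheme of \cite{as06} to the present non-local conormal setting. Given critical points $\gamma\in \mathscr{S}^Q(L)$ and $x\in \mathscr{S}^Q(H)$, I define $\mathscr{M}(\gamma,x)$ to be the set of pairs $(\alpha,u)$, where $\alpha\in C^0((-\infty,0],W^{1,2}_Q([0,1],M))$ is a negative gradient flow line of $\mathbb{S}_L$ with $\alpha(-\infty)=\gamma$, and $u:[0,+\infty)\times [0,1]\rightarrow T^*M$ is a solution of the Floer equation (\ref{eq:Floer}) with boundary condition (\ref{eq:nonlocalbdforu}) satisfying $u(+\infty,\cdot)=x$, coupled by the matching condition $\tau^*\circ u(0,\cdot)=\alpha(0)$ on $[0,1]$.

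I would then carry out the four classical steps. \emph{Fredholm and transversality:} linearizing the coupled problem produces a Fredholm operator whose index equals $i^Q(\gamma)-\mu^Q(x)$; this follows by combining standard spectral flow arguments for the Floer half-strip with the index identity of Corollary \ref{indcor}, which matches the Maslov index at the $s=+\infty$ end with a Morse-type index consistent with the Morse index at the $s=-\infty$ end. For generic choices of $(J,g)$ this operator is surjective, so $\mathscr{M}(\gamma,x)$ is a smooth manifold of dimension $i^Q(\gamma)-\mu^Q(x)$. \emph{Energy estimate and action filtration:} by the Fenchel inequality applied pointwise, $L(t,\gamma,\gamma')\geq \langle p,\gamma'\rangle - H(t,\gamma,p)$ with equality iff $(q,p)=\mathcal{L}(t,q,v)$; integrating along the matched trajectory, using the energy identity (\ref{eq:energyidentityreduced}) for the Floer half-strip and the dissipation inequality for the Morse half-line, together with the fact that the Liouville form vanishes on $N^*Q$, yields
\[
\mathbb{S}_L(\gamma) - \mathbb{A}_H(x) \geq \tfrac{1}{2}\|\partial_s u\|_{L^2}^2 + \tfrac{1}{2}\int_{-\infty}^0 \|\dot{\alpha}\|^2\, ds \geq 0,
\]
with equality only if the trajectory is stationary and $x$ is the Legendre transform of $\gamma$. \emph{Compactness:} the hypotheses (H1)--(H2) together with $J$ being $C^0$-close to $J_0$ yield, as in Theorem 1.14 of \cite{as06}, an $L^\infty$-bound on the Floer component; the exactness of $\omega$ and the vanishing of $\theta$ on $N^*Q$ rule out bubbling of spheres and disks; the Palais-Smale condition for $\mathbb{S}_L$ controls the Morse component. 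Hence $\mathscr{M}(\gamma,x)$ is pre-compact modulo standard breaking.

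\emph{Counting and chain map property:} when $i^Q(\gamma)=\mu^Q(x)$, $\mathscr{M}(\gamma,x)$ is a finite oriented set (orientations are induced coherently as in Section 1.4 of \cite{as06}), and I set $n_\Theta(\gamma,x)$ to be the algebraic count. A standard one-dimensional moduli/gluing argument shows that the boundary of a one-dimensional component of $\mathscr{M}(\gamma,x)$ consists of either a Morse-breaking on the left or a Floer-breaking on the right, producing the identity $\partial_F \Theta = \Theta \partial_M$. Uniqueness of $\Theta$ up to chain homotopy comes from the standard parameter-dependent moduli space argument for paths of coupling data. \emph{Triangularity and isomorphism:} by the energy estimate, $n_\Theta(\gamma,x)=0$ unless $\mathbb{S}_L(\gamma)>\mathbb{A}_H(x)$ or $(\gamma,x)$ correspond; in the latter case the only element of $\mathscr{M}(\gamma,x)$ is the stationary solution, which is automatically transverse and contributes $\pm 1$. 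Therefore $\Theta$ is upper-triangular with $\pm 1$ on the diagonal with respect to the action filtration and the bijection $\mathscr{S}^Q(L)\leftrightarrow \mathscr{S}^Q(H)$; in particular it is a filtration-preserving isomorphism on each filtered piece, hence a chain isomorphism.

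The main obstacle is the compactness and transversality package for the mixed problem with the non-local conormal boundary. Transversality requires a variant of \cite{fhs96} adapted to the half-strip with these Lagrangian boundary conditions, while the $L^\infty$-estimate must accommodate that $u(s,0)$ and $u(s,1)$ vary jointly on $N^*Q\subset T^*M^2$ rather than independently; both extensions, however, reduce to the results of \cite{as06} via the doubling trick (\ref{kham})--(\ref{nbc}) that converts the non-local problem on $T^*M$ into a local one on $T^*M^2$.
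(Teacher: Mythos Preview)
Your proposal is correct and follows essentially the same approach as the paper: the mixed moduli space of half-strip Floer solutions with $\tau^*\circ u(0,\cdot)\in W^u(\gamma)$, the energy estimate via the Fenchel inequality (Lemma~\ref{confronto}), the Fredholm index computation, compactness, gluing, and the upper-triangular argument all match the paper's scheme. Two small points: drop the spurious factors of $\tfrac{1}{2}$ in your energy inequality (the identity (\ref{eq:energyidentityreduced}) has none), and note that the ``automatic transversality'' at the stationary solution is not for free---the paper proves it via a separate comparison lemma between $d^2\mathbb{A}_H(x)$ and $d^2\mathbb{S}_L(\gamma)$, since the linearized operator at that solution is unaffected by the generic perturbations of $J$ and of the metric.
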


\begin{proof}
Let $\gamma\in \mathscr{S}^Q (L)$ and $x\in \mathscr{S}^Q(H)$. Let $\mathscr{M}(\gamma,x)$ be the space of all $T^*M$-valued maps on the half-strip $[0,+\infty[ \times [0,1] $ solving the Floer equation (\ref{eq:Floer}) with the asymptotic condition
\begin{equation}
\label{asymp}
\lim_{s\rightarrow +\infty} u(s,t) = x(t),
\end{equation}
and the boundary conditions
\begin{eqnarray}
\label{ccc1}
(u(s,0),-u(s,1)) \in N^* Q, \qquad \forall s\geq 0, \\
\label{ccc2}
\tau^* \circ u(0,\cdot) \in W^u(\gamma),
\end{eqnarray} 
where $\tau^*:T^*M \rightarrow M$ is the standard projection and $W^u(\gamma)$ denotes the unstable manifold of $\gamma$ with respect to the negative gradient flow of $\mathbb{S}_L$ on $W^{1,2}_Q([0,1],M)$. By elliptic regularity, these maps are smooth on $]0,+\infty[ \times [0,1]$ and continuous on $[0,1] \times [0,+\infty[$. Actually, the fact that $\tau^* \circ u(0,\cdot)$ is in $W^{1,2}([0,1])$ implies that $u\in W^{3/2,2}(]0,S[\times ]0,1[)$ for every $S>0$, and, in particular, $u$ is H\"older continuous up to the boundary. 

The proof of the energy estimate for the elements of $\mathscr{M}(\gamma,x)$ is based on the following immediate consequence of the Fenchel formula (\ref{fenchel}) and of (\ref{equality}):

\begin{lem}
\label{confronto}
If $x=(q,p):[0,1] \to T^*M$ is continuous, with $q$ of class
$W^{1,2}$, then
\[
\mathbb A_H (x) \leq  \mathbb S_L (q),
\]
the equality holding if and only if the curves $(q,q')$ and $(q,p)$ are related by the Legendre transform, that is, $(t,q(t), q'(t))= \mathcal L(t, q(t), q'(t))$ for every $t \in [0,1]$. In particular, the
Hamiltonian and the Lagrangian action coincide on corresponding solutions of the two systems.
\end{lem}

In fact, if $u\in \mathscr{M}(\gamma,x)$, the above Lemma, together with
(\ref{eq:energyidentityreduced}) (which holds because of (\ref{ccc1})), (\ref{asymp}), and (\ref{ccc2}) imply that
\begin{equation}
\label{enest}
\begin{split}
E(u) := \int_0^{+\infty} \int_0^1 |\partial_s u|^2\, ds\, dt = \mathbb{A}_H(u(0,\cdot)) - \mathbb{A}_H(x) \\ \leq \mathbb{S}_L( \tau^* \circ  u(0,\cdot)) - \mathbb{A}_H(x) \leq \mathbb{S}_L(\gamma) -  \mathbb{A}_H(x).
\end{split} \end{equation}
This energy estimate allows to prove that $\mathscr{M}(\gamma,x)$ is pre-compact in $C^{\infty}_{\mathrm{loc}}$, as in Section 1.5 of \cite{as06}. It also implies that:
\begin{enumerate}
\item[(E1)] $\mathscr{M}(\gamma,x)$ is empty if either $\mathbb{S}_L(\gamma) < \mathbb{A}_H(x)$ or $\mathbb{S}_L(\gamma) = \mathbb{A}_H(x)$ but $\gamma$ and $x$ do not correspond to the same solution;
\item[(E2)] $\mathscr{M}(\gamma,x)$ only consists of the element $u(s,t) = x(t)$ if $\gamma$ and $x$ correspond to the same solution. 
\end{enumerate}

The computation of the dimension of $\mathscr{M}(\gamma,x)$ is based on the following linear result, which is a particular case of Theorem 5.24 in \cite{as08}:

\begin{prop}
Let $A:[0,+\infty] \times [0,1] \rightarrow \mathrm{L_s} (\R^{2n})$ be a continuous map into the space of symmetric linear endomorphisms of $\R^{2n}$. Let $V$ and $W$ be linear subspaces of $\R^n$ and $\R^n \times \R^n$, respectively. We assume that $W$ and $V\times V$ are partially orthogonal, meaning that their quotients by the common intersection $W \cap (V\times V)$ are orthogonal in the quotient space. 
We assume that the path $G$ of symplectic automorphisms of $\R^{2n}$ 
defined by
\[
G'(t) = J_0 A(+\infty,t) G(t), \quad G(0) = I, \quad \mbox{where} \; J_0 = \begin{pmatrix} 0 & -I\\
I & 0 \end{pmatrix},
\]
satisfies
\[
\graf G(1)C \cap N^* W = (0).
\]
Then, for every $p\in ]1,+\infty[$, the bounded linear operator
\[
 v \mapsto \partial_s v + J_0 \partial_t v + A(s,t) v
\]
from the Banach space
\[
\set{v\in W^{1,p}(]0,+\infty[ \times ]0,1[, \R^{2n})}{ v(0,t) \in N^* V \; \forall t\in [0,1], \; (v(s,0),-v(s,1)) \in N^* W \; \forall s\geq 0} 
\] 
to the Banach space $L^p( ]0,+\infty[ \times ]0,1[), \R^{2n})$ is Fredholm of index
\begin{equation}
\label{indice}
\frac{n}{2} - \mu( \graf G(\cdot) C,N^*W) - \frac{1}{2} ( \dim W + 2 \dim V - 2 \dim W \cap (V\times V)).
\end{equation}
\end{prop}

If we linearize the problem given by
(\ref{eq:Floer}-\ref{asymp}-\ref{ccc1}), with the condition    
(\ref{ccc2}) replaced by the condition that $\tau^* \circ u(0,\cdot)$ should be a given curve on $M$, we obtain an operator of the kind introduced in the above Proposition, where $V=(0)$, $\dim W = \dim Q$, and $G$ is the linearization of the Hamiltonian flow along $x$. By Proposition \ref{locfor},
\[
\mu(\graf G(\cdot)C,N^*W) = \mu^Q(x) - \frac{1}{2} (\dim Q - n),
\]
so by (\ref{indice}) this operator has index $-\mu^Q(x)$. Since (\ref{ccc2}) requires that the curve $\tau^* \circ u(0,\cdot)$ varies within a manifold of dimension $i^Q(\gamma)$, the linearization of the full problem  (\ref{eq:Floer}-\ref{asymp}-\ref{ccc1}-\ref{ccc2}) produces an operator of index $i^Q(\gamma) - \mu^Q(x)$. By perturbing the time-dependent almost complex structure $J$ 
and the metric on $W^{1,2}_Q([0,1],M)$, we may assume that this operator is onto for every $u\in \mathscr{M}(\gamma,x)$ and every $\gamma\in \mathscr{S}^Q(L)$, $x\in \mathscr{S}^Q(H)$, except for the case in which
$\gamma$ and $x$ correspond to the same solution. In the latter case, by (E2),
$\mathscr{M}(\gamma,x)$ consists of the only map $u(s,t)=x(t)$,
and the corresponding linear operator is not affected by the above perturbations. However, in this case this operator is automatically onto. The proof of this fact is based on the following consequence of Lemma \ref{confronto}, and is analogous to the proof of Proposition 3.7 in \cite{as06}.
 
 \begin{lem}
 If $x\in \mathscr{S}^Q(H)$ and $\gamma = \tau^* \circ x$, then
 \[
 d^2 \mathbb{A}_H(x) [\xi,\xi]  \leq d^2 \mathbb{S}_L (\gamma) [D\tau^* (x) [\xi],  D\tau^* (x) [\xi]],
 \]
 for every section $\xi$ of $x^*(TT^*M)$.
 \end{lem}
 
We conclude that, whenever $\mathscr{M}(\gamma,x)$ is non-empty, it is a manifold of dimension
\[
\dim \mathscr{M}(\gamma,x) = i^Q(\gamma) - \mu^Q(x).
\]
See Section 3.1 in \cite{as06} for more details on the arguments just sketched

When $i^Q(\gamma) = \mu^Q(x)$, compactness and transversality imply that $\mathscr{M}(\gamma,x)$ is a finite set. Each of its points carries an orientation-sign $\pm 1$, as explained in Section 3.2 of \cite{as06}. The sum of these contributions defines the integer $n_{\Theta}(\gamma,x)$. A standard gluing argument shows that the homomorphism 
\[
\Theta : \{ M_*^Q(\mathbb S_L), \partial_*\} \longrightarrow \{F_*^Q(H), \partial_*\}, \quad \Theta \gamma = \sum_{\substack{x\in \mathscr{S}^Q(H)\\ \mu^Q(x) = i^Q(\gamma)}} n_{\Theta}(\gamma,x)\, x, \quad \forall \gamma\in \mathscr{S}^Q(L),
\]
is a chain map. By (E1) such a chain map preserves the action filtration. In other words, if we order the elements of $\mathscr{S}^Q(L)$ and $\mathscr{S}^Q(H)$ -- that is, the generators of the Morse and the Floer complexes -- by increasing action, the homomorphism $\Theta$ is upper-triangular with respect to these ordered sets of generators. Moreover, by (E2) the diagonal elements of $\Theta$ are $\pm 1$. These facts imply that $\Theta$ is an isomorphism, which concludes the proof. 
\end{proof} \qed

\begin{cor}
Let $H:[0,1]\times T^*M \rightarrow \R$ be a Hamiltonian satisfying (H0), (H1), and (H2). Then the  homology of the Floer complex of $(T^*M,Q,H)$ is isomorphic to the singular homology of the path space $P_Q([0,1],M)$.
\end{cor}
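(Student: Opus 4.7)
The plan is to reduce to the electro-magnetic case covered by Theorem \ref{thm:isomorphism} via a homotopy argument within the class of Hamiltonians satisfying (H0), (H1), (H2). First, I would fix an electro-magnetic Lagrangian $L_0$ on $[0,1]\times TM$, for instance $L_0(t,q,v)=\frac{1}{2}g(v,v)$ for a suitable Riemannian metric $g$, perturbed by a small potential $V(t,q)$ chosen to make the associated critical points non-degenerate; this can be done generically, so the Fenchel-dual Hamiltonian $H_0$ satisfies (H0), (H1), (H2). For this specific $H_0$, Theorem \ref{thm:isomorphism} combined with the fact that the Morse complex of $\mathbb{S}_{L_0}$ on $W^{1,2}_Q([0,1],M)$ computes the singular homology of $W^{1,2}_Q([0,1],M)$, and with the homotopy equivalence $W^{1,2}_Q([0,1],M)\simeq P_Q([0,1],M)$ recalled at the end of the preceding section, gives
\[
HF_*^Q(H_0,J) \cong H_*(P_Q([0,1],M)).
\]

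Second, I would invoke the homotopy invariance of the Floer complex within the class of Hamiltonians satisfying (H0), (H1), (H2), in order to conclude that $HF_*^Q(H,J)\cong HF_*^Q(H_0,J)$ for an arbitrary $H$ in this class. Concretely, one joins $H$ and $H_0$ by an interpolating path $\{H_s\}_{s\in[0,1]}$ of Hamiltonians still satisfying (H1) and (H2) with uniform constants, and defines continuation chain maps by counting finite-energy solutions of the $s$-dependent Floer equation on $\R\times[0,1]$ with conormal boundary conditions. Standard arguments (cf.\ Theorems 1.12 and 1.13 in \cite{as06}) then show that such continuation morphisms are chain homotopy equivalences, at least when the two endpoints of the homotopy are close enough. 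The general case is handled by chaining finitely many such short homotopies, the uniform (H1)--(H2) bounds providing the compactness needed at each step.

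Combining the two reductions yields
\[
HF_*^Q(H,J)\cong HF_*^Q(H_0,J)\cong H_*\bigl(W^{1,2}_Q([0,1],M)\bigr)\cong H_*\bigl(P_Q([0,1],M)\bigr),
\]
which is the claim; note that since the chain-isomorphism class of the Floer complex is independent of the generic choice of $J$, the homology depends only on $(T^*M,Q,H)$, so the statement is unambiguous.

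The main obstacle will be establishing the compactness required for the continuation maps along the homotopy from $H$ to $H_0$. Conditions (H1) and (H2) must be controlled uniformly on the interpolating family, and the $L^\infty$ estimate of Theorem 1.14 in \cite{as06} requires $J$ to remain $C^0$-close to the metric almost complex structure $J_0$ throughout. This is handled exactly as in \cite{as06} by breaking the homotopy into sufficiently short pieces, but carrying out this bookkeeping rigorously is the only nontrivial point; the rest of the argument is a direct assembly of Theorem \ref{thm:isomorphism}, the Morse-theoretic computation of $H_*(W^{1,2}_Q([0,1],M))$, and the homotopy equivalence with $P_Q([0,1],M)$.
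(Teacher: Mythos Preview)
Your proposal is correct and follows exactly the logic the paper intends: the Corollary is stated without proof in the paper, but it is an immediate consequence of Theorem \ref{thm:isomorphism}, the Morse-theoretic identification of $H_*(W^{1,2}_Q([0,1],M))$ with the homology of the Morse complex, the homotopy equivalence $W^{1,2}_Q([0,1],M)\simeq P_Q([0,1],M)$, and the chain-homotopy invariance of the Floer complex under change of $H$ within the class (H0)--(H2), all of which are recorded in the preceding sections. Your discussion of the compactness bookkeeping for the continuation maps is accurate and matches the paper's own remark that the Hamiltonians to be joined must be taken close enough (cf.\ Theorems 1.12 and 1.13 in \cite{as06}).
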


\end{document}